\documentclass[a4paper, 12pt]{article}

\usepackage{amsmath,amssymb,amsthm}
\usepackage{setspace}
\setstretch{1.2}
\allowdisplaybreaks

\newtheorem{theorem}{Theorem}[section]
\newtheorem{proposition}[theorem]{Proposition}
\newtheorem{definition}[theorem]{Definition}
\newtheorem{lemma}[theorem]{Lemma}

\theoremstyle{remark}
\newtheorem{remark}[theorem]{Remark}
\newtheorem{example}[theorem]{Example}

\numberwithin{equation}{section}

\DeclareFontEncoding{OT2}{}{}
\DeclareFontSubstitution{OT2}{cmr}{m}{n}
\DeclareFontFamily{OT2}{cmr}{\hyphenchar\font45 }
\DeclareFontShape{OT2}{cmr}{m}{n}{<5><6><7><8><9>gen*wncyr<10><10.95><12><14.4><17.28><20.74><24.88>wncyr10}{}
\DeclareFontShape{OT2}{cmr}{b}{n}{<5><6><7><8><9>gen*wncyb<10><10.95><12><14.4><17.28><20.74><24.88>wncyb10}{}
\DeclareMathAlphabet{\mathcyr}{OT2}{cmr}{m}{n}
\DeclareMathAlphabet{\mathcyb}{OT2}{cmr}{b}{n}
\SetMathAlphabet{\mathcyr}{bold}{OT2}{cmr}{b}{n}
\newcommand{\sh}{\mathcyr {sh}}

\newcommand{\h}{\mathfrak H}
\newcommand{\R}{\mathbb R}
\newcommand{\Z}{{\mathbb Z}}
\newcommand{\N}{{\mathbb N}}
\newcommand{\C}{{\mathbb C}}
\newcommand{\Q}{{\mathbb Q}}
\newcommand{\x}{{\bf x}}
\newcommand{\y}{{\bf y}}
\newcommand{\cG}{\widetilde{G}}

\title{The double shuffle relations\\ for multiple Eisenstein series}
\author{Henrik Bachmann\footnote{email : henrik.bachmann@uni-hamburg.de, Universit\"{a}t Hamburg}, Koji Tasaka\footnote{email : koji.tasaka@math.nagoya-u.ac.jp, Graduate School of Mathematics, Nagoya University}}
\date{}

\begin{document}

\maketitle

\begin{abstract}
We study the multiple Eisenstein series introduced by Gangl, Kaneko and Zagier.
We give a proof of (restricted) finite double shuffle relations for multiple Eisenstein series by revealing an explicit connection between the Fourier expansion of multiple Eisenstein series and the Goncharov coproduct on Hopf algebras of iterated integrals.
\end{abstract}
\noindent
{\bf Keywords:} Multiple zeta value, Multiple Eisenstein series, The Goncharov coproduct, Modular forms, Double shuffle relation. \\
{\bf Subjclass[2010]:} 11M32, 11F11, 13J05, 33E20.


\section{Introduction}

The purpose of this paper is to study the multiple Eisenstein series, which are holomorphic functions on the upper half-plane $\{\tau\in\C\mid {\rm Im}(\tau)>0\}$ and which can be viewed as a multivariate generalization of the classical Eisenstein series, defined as an iterated multiple sum
\begin{equation}\label{eq1_1}
G_{n_1,\ldots,n_r}(\tau)= \sum_{\substack{0\prec \lambda_1\prec\cdots\prec \lambda_r\\ \lambda_1,\ldots,\lambda_r \in \Z\tau+\Z}} \frac{1}{\lambda_1^{n_1}\cdots \lambda_r^{n_r}}\quad (n_1,\ldots,n_{r-1}\in\Z_{\ge2},n_r\in\Z_{\ge3}),
\end{equation}
where the positivity $l\tau+m\succ 0$ of a lattice point is defined to be either $l>0$ or $l=0$, $m>0$, and $l\tau+m\succ l'\tau+m'$ means $(l-l')\tau+(m-m')\succ0$.
These functions were first introduced and studied by Gangl, Kaneko and Zagier \cite[Section 7]{GKZ}, where they investigated the double shuffle relation satisfied by double zeta values for the double Eisenstein series $G_{n_1,n_2}(\tau)$.
Here the double zeta value is the special case of multiple zeta values defined by
\begin{equation} \label{eq1_2}
 \zeta(n_1,\ldots,n_r)=\sum_{\substack{0<m_1<\cdots<m_r\\m_1,\ldots,m_r\in\Z}} \frac{1}{m_1^{n_1}\cdots m_r^{n_r}} \quad (n_1,\ldots,n_{r-1}\in\Z_{\ge1},n_r\in\Z_{\ge2}).
\end{equation}
Their results were extended to the double Eisenstein series for higher level (congruence subgroup of level $N$) in \cite{KT} ($N=2$) and in \cite{YZ} ($N$ : general), and have interesting applications to the theory of modular forms (see \cite{Tasaka}) as well as the study of double zeta values of level $N$.
Our aim of this paper is to give a framework of and a proof of double shuffle relations for multiple Eisenstein series.

The double shuffle relation, or rather, the finite double shuffle relation (cf. e.g. \cite{IKZ}) describes a collection of $\Q$-linear relations among multiple zeta values arising from two ways of expressing multiple zeta values as iterated sums \eqref{eq1_2} and as iterated integrals \eqref{eq3_1}.
Each expression produces an algebraic structure on the $\Q$-vector space spanned by all multiple zeta values.
The product associated to \eqref{eq1_2} (resp. \eqref{eq3_1}) is called the harmonic product (resp. shuffle product).
For example, using the harmonic product, we have
\begin{equation*}
\zeta(3)\zeta(3)=2\zeta(3,3)+\zeta(6),
\end{equation*}
and by the shuffle product formulas one obtains
\begin{equation}\label{eq1_3}
\zeta(3)\zeta(3)=12\zeta(1,5)+6\zeta(2,4)+2\zeta(3,3).
\end{equation}
Combining these equations gives the relation 
\[ 12\zeta(1,5)+6\zeta(2,4)-\zeta(6)=0.\]

For the multiple Eisenstein series \eqref{eq1_1}, it is easily seen that the harmonic product formulas hold when the series defining $G_{n_1,\ldots,n_r}(\tau)$ converges absolutely, i.e. $n_1,\ldots,n_{r-1}\in\Z_{\ge2}$ and $n_r\in\Z_{\ge3}$, but the shuffle product is not the case -- the shuffle product formula \eqref{eq1_3} replacing $\zeta$ with $G$ does not make sense because an undefined multiple Eisenstein series $G_{1,5}(\tau)$ is involved.
This paper develops the shuffle product of multiple Eisenstein series by revealing an explicit connection between the multiple Eisenstein series and the Goncharov coproduct, and as a consequence the validity of a restricted version of the finite double shuffle relations for multiple Eisenstein series is obtained.

This paper begins by computing the Fourier expansion of $G_{n_1,\ldots,n_r}(\tau)$ for $n_1,\ldots,n_r\ge2$ (the case $n_r=2$ will be treated by a certain limit argument in Definition \ref{2_1}) in Section 2.
The Fourier expansion is intimately related with the Goncharov coproduct $\Delta$ (see \eqref{eq3_4}) on Hopf algebras of iterated integrals introduced by Goncharov \cite[Section 2]{G}, which was first observed by Kaneko in several cases and studied by Belcher \cite{Bel}.
His Hopf algebra $\mathcal{I}_\bullet(S)$ is reviewed in Section 3.2, and we will observe a relationship between the Fourier expansion and the Goncharov coproduct $\Delta$ in the quotient Hopf algebra $\mathcal{I}_\bullet^1:=\mathcal{I}_\bullet/\mathbb{I}(0;0;1)\mathcal{I}_\bullet$ ($\mathcal{I}_\bullet:=\mathcal{I}_\bullet(\{0,1\})$, which can not be seen in $\mathcal{I}_\bullet$ itself.
The space $\mathcal{I}_{\bullet}^1$ has a linear basis (Proposition~\ref{3_5})
$$\{I(n_1,\ldots,n_r)\mid r\ge0,n_1,\ldots,n_r\in \Z_{>0}\},$$
and we will express the Goncharov coproduct $\Delta(I(n_1,\ldots,n_r))$ as a certain algebraic combination of the above basis (Propositions \ref{3_8} and \ref{3_11}).
As an example of this expression (see \eqref{eq3_10}), one has
\[  \Delta( I(2,3) ) =   I(2,3) \otimes 1  + 3 I(3) \otimes I(2) + 2 I(2) \otimes I(3) + 1 \otimes I(2,3) . \]
The relationship is then obtained by comparing the formula for $\Delta(I(n_1,\ldots,n_r))$ with the Fourier expansion of $G_{n_1,\ldots,n_r}(\tau)$, which in the case of $r=2$ can be found by \eqref{eq3_10} and \eqref{eq2_8}.
More precisely, let us define the $\Q$-linear maps $\mathfrak{z}^{\sh}:\mathcal{I}^1_\bullet \rightarrow \R$ and $\mathfrak{g}:\mathcal{I}^1_\bullet \rightarrow \C[[q]]$ given by $I(n_1,\ldots,n_r)\mapsto \zeta^{\sh}(n_1,\ldots,n_r)$ and $I(n_1,\ldots,n_r)\mapsto g_{n_1,\ldots,n_r}(q)$, where $\zeta^{\sh}(n_1,\ldots,n_r)$ is the regularized multiple zeta value with respect to the shuffle product (Definition \ref{3_1}) and $g_{n_1,\ldots,n_r}(q)$ is the generating series of the multiple divisor sum appearing in the Fourier expansion of multiple Eisenstein series (see \eqref{eq2_4}). 
For instance, by \eqref{eq2_8} we have
\[  G_{2,3}(\tau) =\zeta(2,3) + 3 \zeta(3) g_2(q)  + 2 \zeta(2) g_3(q) + g_{2,3}(q) ,\]
and hence $ \big( \mathfrak{z}^{\sh}\otimes \mathfrak{g} \big)\circ \Delta(I(2,3)) = G_{2,3}(\tau)$.
In general, we have the following theorem which is the first main result of this paper (proved in Section 3.5).
\begin{theorem} \label{1_1}
For integers $n_1,\ldots,n_r\ge2$ we have
\[ \big( \mathfrak{z}^{\sh}\otimes \mathfrak{g} \big) \circ \Delta(I(n_1,\ldots,n_r)) = G_{n_1,\ldots,n_r}(\tau) \quad (q=e^{2\pi \sqrt{-1}\tau}).\]
\end{theorem}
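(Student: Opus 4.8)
The plan is to establish the identity by computing both sides explicitly and matching them term by term: the right-hand side is the Fourier expansion of $G_{n_1,\ldots,n_r}(\tau)$ set up in Section~2, and the left-hand side is the image of the Goncharov coproduct \eqref{eq3_2} of $I(n_1,\ldots,n_r)$ under $\mathfrak{z}^{\sh}\otimes\mathfrak{g}$. The guiding principle is that the splitting of the lattice sum into a part lying on the positive real axis and a part in the open upper half-plane should mirror, factor for factor, the deconcatenation-type structure of $\Delta$, with the real part producing multiple zeta values (the target of $\mathfrak{z}^{\sh}$) and the non-real part producing the $q$-series $g$ (the target of $\mathfrak{g}$). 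The double case recorded in \eqref{eq2_7} and \eqref{eq3_10} is the model to be generalised.

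For the Fourier-expansion side I would decompose the summation region $0\prec\lambda_1\prec\cdots\prec\lambda_r$ according to the unique index $0\le j\le r$ at which the points $\lambda_i=l_i\tau+m_i$ cross from the real axis to the upper half-plane; the ordering forces $l_1=\cdots=l_j=0<l_{j+1}\le\cdots\le l_r$, and the only relation connecting the two blocks, namely $\lambda_j\prec\lambda_{j+1}$, holds automatically since $l_{j+1}>0=l_j$. The block $i\le j$ then contributes the convergent value $\zeta(n_1,\ldots,n_j)$, which equals $\zeta^{\sh}(n_1,\ldots,n_j)$ because all $n_i\ge2$. Summing the residues $m_i$ in the block $i>j$ by the Lipschitz formula
\[ \sum_{m\in\Z}\frac{1}{(l\tau+m)^n}=\frac{(-2\pi\sqrt{-1})^n}{(n-1)!}\sum_{d\ge1}d^{n-1}q^{ld}\qquad(l\ge1,\ n\ge2), \]
and bookkeeping the residual order relations among coincident $l_i$, produces precisely the multiple $q$-series recorded by $g_{n_{j+1},\ldots,n_r}(q)$ in \eqref{eq2_3}. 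The defining limit of Definition~\ref{2_1} is what makes this summation legitimate for $n_i\ge2$.

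For the coproduct side I would write out Goncharov's formula \eqref{eq3_2} for the word $I(n_1,\ldots,n_r)=I(0;1,0^{n_1-1},\ldots,1,0^{n_r-1};1)$, organise the contributing subsequences by the index $j$ at which the distinguished letters $1$ are cut, and apply $\mathfrak{z}^{\sh}$ to the left factor and $\mathfrak{g}$ to the right. Using the relations (I5), (I6) in $\mathcal{I}^1_\bullet$ and the explicit basis of Proposition~\ref{3_2}, each left factor reduces to a shuffle-regularised value $\zeta^{\sh}$ and each right factor to a $g$-series, so that the coproduct image is again organised as a sum over $j$. The identity then amounts to showing that these two index-$j$ expansions agree.

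I expect the main obstacle to lie in this final matching, and it is twofold. First, one must prove that the multiple $q$-series obtained from the non-real block is \emph{exactly} $\mathfrak{g}$ of the right-hand coproduct factor, not merely of the same shape; this forces one to identify the residual order relations among coincident $l_i$ with the internal shuffle structure encoded by (I5), (I6), i.e.\ with the very relations that make $\mathfrak{g}$ well defined on the quotient $\mathcal{I}^1_\bullet$. Second, although the real blocks above are admissible, the subsequences produced by $\Delta$ generically yield non-admissible integral words (those beginning with $0$ or ending with $1$), whose images under $\mathfrak{z}^{\sh}$ are genuinely regularised; one must therefore verify that all such regularised contributions reorganise correctly so that the coproduct sum collapses onto the clean lattice decomposition. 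Controlling the interchange of the defining limit with the Lipschitz expansion at the boundary between the two blocks is where the real analytic care is needed.
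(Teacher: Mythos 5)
Your overall strategy --- decompose the lattice sum, decompose the coproduct, and match the pieces --- is indeed the paper's strategy, but your central computation on the Fourier side is wrong, and the error propagates into the matching step. You decompose the region $0\prec\lambda_1\prec\cdots\prec\lambda_r$ only by the single crossing index $j$, and you claim that the block $i>j$, summed by the Lipschitz formula, produces precisely $g_{n_{j+1},\ldots,n_r}(q)$. It does not. In that block one only has $0<l_{j+1}\le\cdots\le l_r$, and whenever $l_i=l_{i+1}$ the residues are coupled by $m_i<m_{i+1}$, so the Lipschitz formula cannot be applied variable by variable; the coincident-$l$ configurations produce the multitangent functions $\Psi_{n_i,\ldots,n_k}(l\tau)$, not products of monotangents, whereas $g_{n_{j+1},\ldots,n_r}(q)$ is by \eqref{gq} a sum over \emph{strictly} increasing $l$'s only. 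Already for $r=2$ your expansion would read $\zeta(n_1,n_2)+\zeta(n_1)g_{n_2}(q)+g_{n_1,n_2}(q)$, which misses the cross terms $\sum {\tt b}_{n_1,n_2}^{k_1}\zeta(k_1)g_{k_2}(q)$ present in the true expansion \eqref{eq2_7}; since the coproduct side genuinely produces exactly those terms (see \eqref{eq3_10}), your ``two index-$j$ expansions'' cannot agree, and no amount of analytic care about interchanging limits will fix this --- a structural ingredient is missing.

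The missing ingredients are precisely what the paper supplies. First, the decomposition must be refined from your $r+1$ pieces to $2^r$ pieces indexed by words $w_1\cdots w_r\in\{\x,\y\}^r$ recording, for \emph{every} $i$, whether $\lambda_i-\lambda_{i-1}$ lies on the real axis or in the upper half-plane (Proposition~\ref{2_2}); the same $2^r$-fold splitting, according to which of the letters $\one$ are selected in Goncharov's formula, organizes the coproduct (Proposition~\ref{3_5}), and it is these finer pieces that match one by one. Second, the coincident-$l$ contributions must be reduced by the partial fraction decomposition \eqref{eq2_4} (Bouillot's theorem); this is what creates the binomial-coefficient cross terms $\zeta(k_1)\cdots g_{\cdots}(q)$ on the analytic side that mirror the binomial coefficients produced by (I6) on the algebraic side. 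This parallel between partial fractions and the shuffle relation is the actual engine of the theorem, and it is absent from your proposal. Third, even after both reductions the two formulas (Propositions~\ref{2_3} and \ref{3_7}) do not match verbatim: the Fourier side runs over $k_i\ge2$ while the coproduct side runs over $k_i\ge1$, and one needs the non-trivial relation \eqref{eq2_5} (the vanishing of the coefficient of $\Psi_1(\tau)$) to see that the terms with some $k_{q_j}=1$ on the coproduct side cancel among themselves. Your second ``obstacle'' paragraph gestures at regularised contributions reorganising, but without \eqref{eq2_5} or an equivalent identity that step cannot be completed.
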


The maps $\Delta:\mathcal{I}^1_\bullet\rightarrow \mathcal{I}^1_\bullet \otimes \mathcal{I}^1_\bullet$ and $\mathfrak{z}^{\sh}:\mathcal{I}^1_\bullet\rightarrow \R$ are algebra homomorphisms (Propositions \ref{3_4} and \ref{3_7}) but the map $\mathfrak{g}:\mathcal{I}_\bullet^1 \rightarrow \C[[q]]$ is not an algebra homomorphism (see Remark \ref{4_7}).
Thus we can not expect a validity of the shuffle product formulas for the $q$-series $\big( \mathfrak{z}^{\sh}\otimes \mathfrak{g} \big) \circ \Delta(I(n_1,\ldots,n_r))$ ($n_1,\ldots,n_r\in\Z_{\ge1}$) which can be naturally regarded as an extension of $G_{n_1,\ldots,n_r}(\tau)$ to the indices with $n_i=1$. 

We shall construct in Section 4.1 an algebra homomorphism $\mathfrak{g}^\sh : \mathcal{I}_\bullet^1\rightarrow \C[[q]]$ (Definition \ref{4_5}) using certain $q$-series, and in Section 4.2 we define a regularized multiple Eisenstein series (Definition \ref{4_8})
$$G^\sh_{n_1,\ldots,n_r}(q):=\big( \mathfrak{z}^{\sh}\otimes \mathfrak{g}^\sh \big) \circ \Delta(I(n_1,\ldots,n_r)) \in\C[[q]] \quad (n_1,\ldots,n_r\in\Z_{\ge1}) .$$
It follows from the definition that the $q$-series $G^\sh_{n_1,\ldots,n_r}(q)$ ($n_1,\ldots,n_r\in\Z_{\ge1}$) satisfy the shuffle product formulas.
We will prove that $G^\sh_{n_1,\ldots,n_r}(q)$ coincides with the Fourier expansion of $G_{n_1,\ldots,n_r}(\tau)$ when $n_1,\ldots,n_r\ge2$ and $q=e^{2\pi \sqrt{-1}\tau}$ (Theorem \ref{4_10}).
Then, combining the shuffle product of $G^\sh$'s and the harmonic product of $G$'s yields the double shuffle relation for multiple Eisenstein series, which is the second main result of this paper (proved in Section 4.2).
\begin{theorem}\label{1_2}
The restricted finite double shuffle relations hold for $G^\sh_{n_1,\ldots,n_r}(q)$ ($n_1,\ldots,n_r\in\Z_{\ge1})$.
\end{theorem}

The organization of this paper is as follows.
In section 2, the Fourier expansion of the multiple Eisenstein series $G_{n_1,\ldots,n_r}(\tau)$ is considered.
In section 3, we first recall the regularized multiple zeta value and Hopf algebras of iterated integrals introduced by Goncharov. 
Then we define the map $\mathfrak{z}^{\sh}$ that assigns regularized multiple zeta value to formal iterated integrals. 
We also present the formula expressing $\Delta(I(n_1,\ldots,n_r))$ as a certain algebraic combination of $I(k_1,\ldots,k_i)$'s, and finally proves Theorem \ref{1_1}.
Section 4 gives the definition of the algebra homomorphism $\mathfrak{g}^{\sh}$ and proves double shuffle relations for multiple Eisenstein series.
A future problem with the dimension of the space of $G^\sh$'s will be discussed in the end of this section.

\section*{Acknowledgement}

This work was started during a visit of the second author at the Unversit\"at Hamburg in June 2014 and supported by the Research Training Group 1670 ``Mathematics Inspired by String Theory and Quantum Field Theory" at the Unversit\"at Hamburg.
The authors would like to thank Herbert Gangl, Masanobu Kaneko and Ulf K\"uhn for very useful comments. 
The second author was partially supported by Basic Science Research Program through the National Research Foundation of Korea (NRF) funded by the Ministry of Education (2013053914). 
The authors thank the reviewer for their thorough helpful and constructive comments, which significantly contributed to improving the manuscript.

\section{The Fourier expansion of multiple Eisenstein series}

\subsection{Multiple Eisenstein series}

In this subsection, we define the multiple Eisenstein series and compute its Fourier expansion.

Recall the computation of the Fourier expansion of $G_{n_1}(\tau)$, which is well-known (see also \cite[Section 7]{GKZ}):
\begin{align*} 
G_{n_1}(\tau)&=\sum_{0\prec l\tau+m} \frac{1}{(l\tau+m)^{n_1}}=\sum_{m>0} \frac{1}{m^{n_1}} + \sum_{l>0}\sum_{m\in\Z} \frac{1}{(l\tau+m)^{n_1}}\\
&= \zeta(n_1) + \frac{(-2\pi \sqrt{-1})^{n_1}}{(n_1-1)!} \sum_{n>0} \sigma_{n_1-1}(n) q^n ,
\end{align*}
where $\sigma_k(n)=\sum_{d\mid n}d^{k}$ is the divisor function and $ q=e^{2\pi \sqrt{-1}\tau}$.
Here for the last equality we have used the Lipschitz formula
\begin{equation}\label{eq2_1} 
\sum_{m\in\Z}\frac{1}{(\tau+m)^{n_1}} =\frac{(-2\pi \sqrt{-1})^{n_1} }{(n_1-1)!} \sum_{0<c_1} c_1^{n_1-1} q^{c_1} \quad (n_1\ge2).
\end{equation}
When $n_1=2$, the above computation (the second equality) can be justified by using a limit argument which in general is treated in Definition \ref{2_1} below.
We remark that the function $G_{n_1}(\tau)$ is a modular form of weight $n_1$ for ${\rm SL}_2(\Z)$ when $n_1$ is even $(>2)$ ($G_2(\tau)$ is called the quasimodular form) and a non-trivial holomorphic function even if $n_1$ is odd.

The following definition enables us to compute the Fourier expansion of $G_{n_1,\ldots,n_r}(\tau)$ for integers $n_1,\ldots,n_r\ge2$ and coincides with the iterated multiple sum \eqref{eq1_1} when the series defining \eqref{eq1_1} converges absolutely, i.e. $n_1,\ldots,n_{r-1}\ge2$ and $n_r\ge3$.

\begin{definition}\label{2_1}
For integers $n_1,\ldots,n_r\ge2$, we define the holomorphic function $G_{n_1,\ldots,n_r}(\tau)$ on the upper half-plane called the multiple Eisenstein series by
\begin{align*} G_{n_1,\ldots,n_r}(\tau) &:= \lim_{L\rightarrow \infty} \lim_{M\rightarrow \infty}   \sum_{\substack{0\prec \lambda_1\prec \cdots\prec \lambda_r\\ \lambda_i\in\Z_L \tau+\Z_M}} \frac{1}{\lambda_1^{n_1}\cdots \lambda_r^{n_r}}\\
&=  \lim_{L\rightarrow \infty} \lim_{M\rightarrow \infty}   \sum_{\substack{0\prec (l_1\tau+m_1)\prec \cdots\prec (l_r\tau+m_r)\\ -L \le l_1,\ldots,l_r \le L\\ -M\le m_1,\ldots,m_r \le M }} \frac{1}{(l_1\tau+m_1)^{n_1}\cdots (l_r\tau+m_r)^{n_r}},
\end{align*}
where we set $\Z_M=\{-M,-M+1,\ldots,-1,0,1,\ldots,M-1,M\}$ for an integer $M>0$. 
\end{definition}

The Fourier expansion of $G_{n_1,\ldots,n_r}(\tau)$ for integers $n_1,\ldots,n_r\ge2$ is obtained by splitting up the sum into $2^r$ terms, which was first done in \cite{GKZ} for the case $r=2$ and in \cite{Bach} for the general case (they use the opposite convention, so that the $\lambda_i$'s are ordered by $\lambda_1\succ\cdots \succ\lambda_r\succ 0$).
To describe each term we introduce the holomorphic function $G_{n_1,\ldots,n_r}(w_1 \cdots w_r;\tau) $ on the upper half-plane below.
For convenience, we express the set $P$ of positive elements in the lattice $\Z \tau + \Z$ as the disjoint union of two sets
\begin{align*}
P_\x &:= \left\{ l\tau + m \in\Z \tau + \Z \mid l = 0 \wedge m > 0  \right\} ,\\
P_\y &:= \left\{ l\tau + m \in \Z \tau + \Z \mid l > 0  \right\} ,
\end{align*}
i.e. $P_\x$ are the lattice points on the positive real axis, $P_\y$ are the lattice points in the upper half-plane and $P=P_\x \cup P_\y$. 
We notice that $\lambda_1 \prec \lambda_2 $ is equivalent to $ \lambda_2 - \lambda_1 \in P$. 
Let us denote by $\{ \x,  \y\}^*$ the set of all words consisting of letters $\x$ and $\y$.
For integers $n_1,\ldots,n_r\ge2$ and a word $w_1 \cdots w_r\in \{\x,\y\}^*$ ($w_i\in\{\x,\y\}$) we define
\begin{align*}
 G_{n_1,\ldots,n_r}(w_1 \cdots w_r)&=G_{n_1,\ldots,n_r}(w_1 \cdots w_r;\tau) \\
 &:=\lim_{L\rightarrow \infty} \lim_{M\rightarrow \infty} \sum_{\substack{\lambda_1 - \lambda_{0} \in P_{w_1}\\\vdots \\ \lambda_r - \lambda_{r-1} \in P_{w_r} \\ \lambda_1 , \dots, \lambda_r \in \Z_L \tau +\Z_M}} \frac{1}{\lambda_1^{n_1}\cdots \lambda_r^{n_r}} ,
 \end{align*}
where $\lambda_0 := 0$.
Note that in the above sum, adjoining elements $\lambda_i-\lambda_{i-1}=(l_i-l_{i-1})\tau+(m_i-m_{i-1}),\ldots,\lambda_j-\lambda_{j-1}=(l_j-l_{j-1})\tau+(m_j-m_{j-1})$ are in $P_\x$ (i.e. $w_i=\cdots=w_j=\x$ with $i\le j$) if and only if they satisfy $m_{i-1}<m_i<\cdots<m_j$ with $l_{i-1}=l_i=\cdots=l_j$ (since $(l-l')\tau+(m-m')\in P_\x$ if and only if $l=l'$ and $m>m'$), and hence the function $G_{n_1,\ldots,n_r}(w_1\cdots w_r)$ is expressible in terms of the following function:
\[ \Psi_{n_1,\ldots,n_r}(\tau) = \sum_{-\infty<m_1<\cdots<m_r<\infty} \frac{1}{(\tau+m_1)^{n_1}\cdots (\tau+m_r)^{n_r}},\]
which was studied thoroughly in \cite{Boulliot}.
In fact, as is easily seen that the series defining $\Psi_{n_1,\ldots,n_r}(\tau)$ converges absolutely when $n_1,\ldots,n_r\ge2$, we obtain the following expression:
\begin{equation}\label{eq2_2}
\begin{aligned}
&G_{n_1,\ldots, n_r}(w_1\cdots w_r)\\
&= \zeta(n_1,\ldots,n_{t_1-1})\sum_{0<l_1<\cdots<l_h} \Psi_{n_{t_1},\ldots,n_{t_2-1}}(l_1\tau)\cdots \Psi_{n_{t_h},\ldots,n_r}(l_h\tau),
\end{aligned}
\end{equation}
where $0<t_1<\cdots<t_h<r+1$ describe the positions of $\y$'s in the word $w_1\cdots w_r$, i.e.  
\begin{equation*}
w_1\cdots w_r=\underbrace{\x\cdots \x}_{t_1-1}\y \underbrace{\x \cdots \x}_{t_2-t_1-1} \y \x \cdots \y \underbrace{\x \cdots \x}_{t_h-t_{h-1}-1} \y \underbrace{\x \cdots \x}_{r-t_h},
\end{equation*}
and $\zeta(n_1,\ldots,n_{t_1-1})=1$ when $t_1=1$.

We remark that the above expression of words gives a one-to-one correspondence between words of length $r$ in $\{\x,\y\}^*$ and the ordered subsets of $\{1,2,\ldots,r\}$.
As we will use later, this correspondence is written as the map
\begin{equation}\label{eq2_3}
\begin{aligned} 
\rho: \{\x,\y\}^*_r&\longrightarrow 2^{\{1,2,\ldots,r\}}\\
w_1\cdots w_r&\longmapsto \{t_1,\ldots,t_h\},
\end{aligned}
\end{equation}
where $\{\x,\y\}^*_r$ is the set of words of length $r$ and $2^{\{1,2,\ldots,r\}}$ is the set of all subsets of $\{1,2,\ldots,r\}$.
For instance, we have $\rho(\x\y\x\y\x^{r-4}) =\{2,4\}$ and $\rho(\x^r)=\{\emptyset\}$.

\begin{proposition}\label{2_2}
For integers $n_1,\ldots,n_{r}\ge2$, we have
\[ G_{n_1,\ldots,n_r} (\tau) = \sum_{w_1,\ldots,w_r\in\{\x,\y\}} G_{n_1,\ldots,n_r}(w_1 \cdots w_r).\]
\end{proposition}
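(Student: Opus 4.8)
The plan is to exploit the observation recorded just before the statement, namely that $\lambda_{i-1}\prec\lambda_i$ is equivalent to $\lambda_i-\lambda_{i-1}\in P$ (with the convention $\lambda_0=0$), together with the fact that $P=P_\x\sqcup P_\y$ is a \emph{disjoint} union. Thus the chain condition $0\prec\lambda_1\prec\cdots\prec\lambda_r$ is equivalent to requiring $\lambda_i-\lambda_{i-1}\in P$ for every $i$, and for each such tuple exactly one of the two alternatives $\lambda_i-\lambda_{i-1}\in P_\x$ or $\lambda_i-\lambda_{i-1}\in P_\y$ holds. Recording these alternatives as a letter $w_i\in\{\x,\y\}$ assigns to every admissible tuple a unique word $w_1\cdots w_r\in\{\x,\y\}^r$, and conversely the tuples summed over in $G_{n_1,\ldots,n_r}(w_1\cdots w_r)$ are precisely those that produce the word $w_1\cdots w_r$.

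First I would carry this out at the level of the truncated sums, that is for fixed $L$ and $M$. Writing
\[
S_{L,M}=\sum_{\substack{0\prec\lambda_1\prec\cdots\prec\lambda_r\\ \lambda_i\in\Z_L\tau+\Z_M}}\frac{1}{\lambda_1^{n_1}\cdots\lambda_r^{n_r}},
\qquad
S_{L,M}(w)=\sum_{\substack{\lambda_i-\lambda_{i-1}\in P_{w_i}\\ \lambda_i\in\Z_L\tau+\Z_M}}\frac{1}{\lambda_1^{n_1}\cdots\lambda_r^{n_r}},
\]
the index set of $S_{L,M}$ is partitioned into the $2^r$ disjoint index sets of the $S_{L,M}(w)$ by the rule above. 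Note that $P_{w_i}\subseteq P$ automatically forces $\lambda_i\succ\lambda_{i-1}$, so the defining constraints $\lambda_i-\lambda_{i-1}\in P_{w_i}$ of $G_{n_1,\ldots,n_r}(w_1\cdots w_r)$ already encode the ordering and no separate hypothesis $0\prec\lambda_1$ is needed; moreover the box condition $\lambda_i\in\Z_L\tau+\Z_M$ is imposed on each $\lambda_i$ in both sums, so it is unaffected by the regrouping. Since each $S_{L,M}$ is a finite sum, regrouping its terms according to this partition yields the exact identity $S_{L,M}=\sum_{w\in\{\x,\y\}^r}S_{L,M}(w)$ for every $L,M$.

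It then remains to pass to the iterated limit $\lim_{L\to\infty}\lim_{M\to\infty}$ on both sides. Because the index set $\{\x,\y\}^r$ is finite, the limit of the finite sum is the sum of the limits, provided each limit $\lim_{L}\lim_{M}S_{L,M}(w)=G_{n_1,\ldots,n_r}(w_1\cdots w_r)$ exists; consequently the limit defining $G_{n_1,\ldots,n_r}(\tau)$ exists as well and equals $\sum_w G_{n_1,\ldots,n_r}(w_1\cdots w_r)$. I expect the only point requiring genuine care to be this interchange of the iterated limit with the sum over words: although finiteness of $\{\x,\y\}^r$ makes it formally immediate, one must know that each individual piece converges under the prescribed order of limits ($M\to\infty$ first, then $L\to\infty$) rather than merely in some joint sense. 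This is exactly what the absolutely convergent expression \eqref{eq2_1} secures, since it writes each $G_{n_1,\ldots,n_r}(w_1\cdots w_r)$ in terms of convergent multiple zeta values and the absolutely convergent series $\Psi$ (using $n_1,\ldots,n_r\ge2$); I would therefore cite that convergence explicitly to close the argument.
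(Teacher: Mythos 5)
Your proof is correct and follows essentially the same route as the paper: the chain condition $0\prec\lambda_1\prec\cdots\prec\lambda_r$ is rewritten as $\lambda_i-\lambda_{i-1}\in P=P_\x\sqcup P_\y$ and the (finite, truncated) sum is partitioned over the $2^r$ resulting words. The paper's own proof is just this partition stated in two lines; your additional care about passing the iterated limit through the finite sum over words, justified by the convergence underlying \eqref{eq2_1}, is a sound elaboration of a point the paper leaves implicit rather than a different argument.
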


\begin{proof}
For $\lambda_1,\ldots,\lambda_r\in \Z_L \tau + \Z_M$, the condition $0\prec \lambda_1\prec \cdots\prec \lambda_r$ is by definition equivalent to $\lambda_i - \lambda_{i-1} \in P = P_\x \cup P_\y$ for all $1\le i\le r-1$ (recall $\lambda_0=0$).
Since $ \lambda_i - \lambda_{i-1}$ can be either in $P_\x$ or in $P_\y$ we complete the proof.
\end{proof}

\begin{example}\label{2_3}
In the case of $r=2$, one has for $n_1\ge2,n_2\ge3$
\begin{align*}
 G_{n_1,n_2}(\tau) &=  \sum_{\substack{0\prec \lambda_1\prec \lambda_2 \\\lambda_1,\lambda_2\in\Z \tau + \Z }} \lambda_1^{-n_1}\lambda_2^{-n_2} = \sum_{\substack{\lambda_1-\lambda_0 \in P\\ \lambda_2-\lambda_1\in P\\\lambda_1,\lambda_2\in\Z \tau + \Z }} \lambda_1^{-n_1}\lambda_2^{-n_2}\\
& =\bigg( \sum_{\substack{ \lambda_1-\lambda_0\in P_\x \\ \lambda_2-\lambda_1\in P_\x \\ \lambda_1,\lambda_2\in\Z \tau + \Z}} +\sum_{\substack{\lambda_1-\lambda_0\in P_\x \\ \lambda_2-\lambda_1\in P_\y \\ \lambda_1,\lambda_2\in\Z \tau + \Z}} + \sum_{\substack{\lambda_1-\lambda_0\in P_\y \\ \lambda_2-\lambda_1\in P_\x \\ \lambda_1,\lambda_2\in\Z \tau + \Z}}+\sum_{\substack{\lambda_1-\lambda_0\in P_\y \\ \lambda_2-\lambda_1\in P_\y \\ \lambda_1,\lambda_2\in\Z \tau + \Z}} \bigg) \lambda_1^{-n_1}\lambda_2^{-n_2}\\
&= G_{n_1,n_2}(\x\x) + G_{n_1,n_2}(\x\y)+G_{n_1,n_2}(\y\x) + G_{n_1,n_2}(\y\y).
\end{align*}
\end{example}

\subsection{Computing the Fourier expansion}

In this subsection, we give a Fourier expansion of $G_{n_1,\ldots,n_r}(w_1 \cdots w_r)$.

Let us define the $q$-series $g_{n_1,\ldots,n_r}(q)$ for integers $n_1,\ldots,n_r\ge1$ by
\begin{equation}\label{eq2_4}
g_{n_1,\ldots,n_r}(q) = \frac{(-2\pi \sqrt{-1})^{n_1+\cdots+n_r}}{(n_1-1)!\cdots(n_r-1)!} \sum_{\substack{0<d_1<\cdots<d_r\\c_1,\ldots,c_r\in\N\\d_1,\ldots,d_r\in\N}} c_1^{n_1-1}\cdots c_r^{n_r-1}q^{c_1d_1+\cdots+c_rd_r},
\end{equation}
which divided by $(-2\pi \sqrt{-1})^{n_1+\cdots+n_r}$ was studied in \cite{BK}.
We remark that since $g_{n_1}(q)$ is the generating series of the divisor function $\sigma_{n_1-1}(n)$ up to a scalar factor, the coefficient of $q^n$ in the $q$-series $g_{n_1,\ldots,n_r}(q)$ is called the multiple divisor sum in \cite{BK} with the opposite convention:
\[ \sigma_{n_1,\ldots,n_r}(n)=\sum_{\substack{c_1d_1+\cdots+c_rd_r=n\\0<d_1<\cdots<d_r\\c_1,\ldots,c_r\in \N\\d_1,\ldots,d_r\in\N}} c_1^{n_1-1}\cdots c_r^{n_r-1},\]
which is regarded as a multiple version of the divisor sum (we do not discuss their properties in this paper). 
We will investigate an algebraic structure related to the $q$-series $g_{n_1,\ldots,n_r}(q)$ in a subsequent paper. 

To give the Fourier expansion of $G_{n_1,\ldots,n_r}(w_1,\ldots,w_r)$, we need the following lemma.

\begin{lemma}\label{2_4}
For integers $n_1,\ldots,n_r\ge2$, we have
\begin{equation*}
\begin{aligned}
 \sum_{q=1}^r\sum_{\substack{k_1+\cdots+k_r=n_1+\cdots+n_r\\k_i\ge n_i,k_q=1}} & \bigg( (-1)^{n_q+k_{q+1}+\cdots+k_r} \prod_{\substack{j=1\\j\neq q}}^{r} \binom{k_j-1}{n_j-1} \\
&\times\zeta(k_{q-1},k_{q-2},\ldots,k_1)\zeta(k_{q+1},k_{q+2},\ldots,k_r)\bigg)=0,
\end{aligned}
\end{equation*} 
where $\zeta(n_1,\ldots,n_r)=1$ when $r=0$.
\end{lemma}

\begin{proof}
This was shown by using an iterated integral expression of multiple zeta values in \cite[Section 5.5]{Boulliot} (his notations $\mathcal{T}e^{n_r,\ldots,n_1}(z)$ and $\mathcal{Z}e^{n_r,\ldots,n_1}$ correspond to our $\Psi_{n_1,\ldots,n_r}(z) $ and $\zeta(n_1,\ldots,n_r)$, respectively).
We remark that he proved the identities in Lemma \ref{2_4} for $n_1,\ldots,n_r\ge1$ with $n_1,n_r\ge2$.
\end{proof}

\begin{proposition}\label{2_5}
For integers $n_1,\ldots,n_r\ge2$ and a word $w_1\cdots w_r\in \{\x,\y\}^*$, we set $N_{t_m}=n_{t_m}+\cdots+n_{t_{m+1}-1}$ for $m\in\{1,\ldots,h\}$ where $\{t_1,\ldots,t_h\}=\rho(w_1\cdots w_r)$ given by \eqref{eq2_3} and $t_{h+1}=r+1$.
Then the function $G_{n_1,\ldots, n_r}(w_1\cdots w_r;\tau)$ has the following Fourier expansion:
\begin{align*}
&G_{n_1,\ldots, n_r}(w_1\cdots w_r)= \zeta(n_1,\ldots,n_{t_1-1}) \\
&\times\sum_{\substack{t_1\le q_1\le t_2-1\\t_2\le q_2\le t_3-1\\\vdots\\ t_h\le q_h\le r}}  \sum_{\substack{k_{t_1}+\cdots+k_{t_{2}-1}=N_{t_1}\\k_{t_2}+\cdots+k_{t_3-1}=N_{t_2}\\
\vdots\\ k_{t_h}+\cdots+k_r=N_{t_h}\\ k_{t_1},k_{t_1+1},\ldots,k_r\ge2}}  \bigg\{(-1)^{\sum_{m=1}^h (N_{t_m}+n_{q_m}+k_{q_m+1}+k_{q_m+2}+\cdots+k_{q_{m+1}-1})}\\
&\times \bigg(\prod_{\substack{j=t_1\\j\neq q_1,\ldots,q_h}}^{r} \binom{k_j-1}{n_j-1} \bigg) \bigg(\prod_{m=1}^h \zeta(\underbrace{k_{q_m-1},\ldots,k_{t_m}}_{q_m-t_m})\zeta(\underbrace{k_{q_m+1},\ldots,k_{t_{m+1}-1}}_{t_{m+1}-q_m-1})\bigg) \\
&\times g_{k_{q_1},\ldots,k_{q_h}}(q)\bigg\},
\end{align*}
where $q=e^{2\pi \sqrt{-1}\tau}$, $\zeta(n_1,\ldots,n_r)=g_{n_1,\ldots,n_r}(q)=1$ whenever $r=0$ and $\prod_{\substack{j=t_1\\j\neq q_1,\ldots,q_h}}^{r} \binom{k_j-1}{n_j-1} =1$ when the product is empty, i.e. when $\{t_1,t_1+1,\ldots,r\}= \{q_1,\ldots,q_h\}$.
\end{proposition}

\begin{proof}
Put $N=n_1+\cdots+n_r$. 
Using the partial fraction decomposition
\begin{align*}
&\frac{1}{(\tau+m_1)^{n_1}\cdots (\tau+m_r)^{n_r}} \\
&= \sum_{q=1}^r\sum_{\substack{k_1+\cdots+k_r=N\\k_1,\ldots,k_r\ge1}} \left( \prod_{j=1}^{q-1}\frac{\binom{k_j-1}{n_j-1}}{(m_q-m_j)^{k_j}}\right) \frac{(-1)^{N+n_q}}{(\tau+m_q)^{k_q}} \left( \prod_{j=q+1}^r \frac{(-1)^{k_j}\binom{k_j-1}{n_j-1}}{(m_j-m_q)^{k_j}}  \right),
\end{align*}
we obtain
\begin{equation}\label{eq2_5}
\begin{aligned}
\Psi_{n_1,\ldots,n_r}(\tau)=&  \sum_{q=1}^r\sum_{\substack{k_1+\cdots+k_r=N\\k_1,\ldots,k_r\ge1}} \bigg( (-1)^{N+n_q+k_{q+1}+\cdots+k_r} \prod_{\substack{j=1\\j\neq q}}^{r} \binom{k_j-1}{n_j-1} \\
&\times\zeta(\underbrace{k_{q-1},k_{q-2},\ldots,k_1}_{q-1})\Psi_{k_q}(\tau) \zeta(\underbrace{k_{q+1},k_{q+2},\ldots,k_r}_{r-q})\bigg),
\end{aligned}
\end{equation}
where the implied interchange of order of summation is justified because the binomial coefficient $\binom{k_i-1}{n_i-1}$ vanishes if $k_1=1$ or $k_r=1$ and by Lemma \ref{2_4} the coefficient of $\Psi_1(\tau)$ is zero.
Using the Lipschitz formula \eqref{eq2_1} we easily find that
\begin{equation}\label{eq2_6}
g_{n_1,\ldots,n_r}(q) = \sum_{0 < d_1 <\dots <d_r} \Psi_{n_1}(d_1\tau) \dots \Psi_{n_r}(d_r\tau)
\end{equation}
for integers $n_1,\ldots,n_r\ge2$.
Thus, combining the above formulas \eqref{eq2_5} and \eqref{eq2_6} with \eqref{eq2_2}, we have the desired formula. 
\end{proof}

We remark that the formula \eqref{eq2_5}, which in the case of $r=2$ was done in \cite[Proof of Theorem 6]{GKZ}, is found in \cite[Theorem 3]{Boulliot} and holds when $n_1,\ldots,n_r\ge1$ with $n_1,n_r\ge2$, but we use only the formula \eqref{eq2_5} for $n_1,\ldots,n_r\ge2$ in this paper.

Let us illustrate a few examples.

\begin{example}
When $r=1$, we have for $n_1>1$ 
\begin{align*}
G_{n_1}(\x;\tau)&=\zeta(n_1)\ \mbox{and} \ G_{n_1}(\y;\tau)=\sum_{0<l} \Psi_{n_1}(l\tau) = g_{n_1}(q),
\end{align*}
and hence
\[G_{n_1}(\tau) = \zeta(n_1)+g_{n_1}(q).\]
\end{example}

\begin{example}\label{2_6}
We compute the case $r=2$, which was carried out in \cite{GKZ}.
From \eqref{eq2_2} and \eqref{eq2_6}, it follows
\begin{align*}
G_{n_1,n_2}(\x\x) &= \zeta(n_1,n_2),\\
G_{n_1,n_2} ( \x\y) &=  \zeta(n_1) \sum_{0<l} \Psi_{n_2}(l\tau) =\zeta(n_1) g_{n_2}(q), \\
 G_{n_1,n_2}(\y\y) &=  \sum_{0<l_1<l_2} \Psi_{n_1}(l_1\tau)\Psi_{n_2}(l_2\tau)=g_{n_1,n_2}(q),
\end{align*}
and using \eqref{eq2_5}, we have 
\[ G_{n_1,n_2}(\y\x) = \sum_{0<l} \Psi_{n_1,n_2}(l\tau) =\sum_{\substack{k_1+k_2=n_1+n_2\\k_1,k_2\ge2}} {\tt b}_{n_1,n_2}^{k_1} \zeta(k_1)g_{k_2}(q),\]
where for integers $n,n',k>0$ we set
\begin{equation}\label{eq2_7}
{\tt b}_{n,n'}^{k} = (-1)^{n} \binom{k-1}{n-1} +(-1)^{k-n'} \binom{k-1}{n'-1}.
\end{equation}
Thus the Fourier expansion of $G_{n_1,n_2}(\tau)$ is given by
\begin{equation}\label{eq2_8}
 G_{n_1,n_2}(\tau) = \zeta(n_1,n_2)+\sum_{\substack{k_1+k_2=n_1+n_2\\k_1,k_2\ge2}} \big( \delta_{n_1,k_1} + {\tt b}_{n_1,n_2}^{k_1} \big) \zeta(k_1)g_{k_2}(q) + g_{n_1,n_2}(q),
\end{equation}
where $\delta_{n,k}$ is the Kronecker delta.
\end{example}

\begin{example}\label{2_7}
For the future literature, we present the Fourier expansion of $G_{n_1,n_2,n_3}(\tau)$ with $n_1,n_2,n_3\ge2$:
\begin{align*}
&G_{n_1,n_2,n_3}(\tau)\\
& =\zeta (n_1,n_2,n_3)+\zeta(n_1,n_2)g_{n_3}(q)+\zeta(n_1)g_{n_2,n_3}(q)+g_{n_1,n_2,n_3}(q)\\
&+ \sum_{\substack{k_1+k_2+k_3=n_1+n_2+n_3\\ k_1,k_2,k_3\ge2}} \bigg\{\left( \delta_{n_3,k_3}{\tt b}_{n_1,n_2}^{k_1}+\delta_{n_1,k_2}{\tt b}_{n_2,n_3}^{k_1} \right) \zeta(k_1)g_{k_2,k_3}(q) \\
& +\left((-1)^{n_1+k_3}\binom{k_2-1}{n_3-1} + (-1)^{n_1+n_2} \binom{k_2-1}{n_1-1} \right)  \binom{k_1-1}{n_2-1} \zeta(k_1,k_2)g_{k_3}(q) \\
&+\left( (-1)^{n_1+n_3+k_2} \binom{k_1-1}{n_1-1}\binom{k_2-1}{n_3-1}  + \delta_{k_1,n_1} {\tt b}_{n_2,n_3}^{k_2} \right) \zeta(k_1)\zeta(k_2)g_{k_3}(q) \bigg\}   .
\end{align*}
\end{example}

\section{A relationship between multiple Eisenstein series and the Goncharov coproduct}

\subsection{Regularized multiple zeta values}

In this subsection, we recall the regularized multiple zeta value with respect to the shuffle product defined in \cite{IKZ}.
We first review an iterated integral expression of the multiple zeta value due to Kontsevich and Drinfel'd, and then recall the algebraic setup of multiple zeta values given by Hoffman.

We denote by $\omega_0(t) = \frac{dt}{t}$ and $\omega_1(t) = \frac{dt}{1-t}$ holomorphic 1-forms on the smooth manifold $\mathbb{P}_{\C}^1\backslash\{0,1,\infty\}$.
For integers $n_1,\ldots,n_{r-1}\ge1$ and $n_r\ge2$ with $N=n_1+\cdots+n_r$, the multiple zeta value $\zeta(n_1,\ldots,n_r)$ is expressible as an iterated integral on the smooth manifold $\mathbb{P}^1_{\C}\backslash\{0,1,\infty\}$:
\begin{equation}\label{eq3_1}
\begin{aligned}
\zeta(n_1,\ldots,n_r) &=\mathop {\int \cdots \int}_{0<t_1<t_2<\cdots<t_N<1} \omega_{a_1}(t_1)\wedge \omega_{a_2}(t_2)\wedge \cdots \wedge \omega_{a_N}(t_{N}),
\end{aligned} 
\end{equation}
where $a_i=1$ if $i\in \{1,n_1+1,n_1+n_2+1,\ldots,n_1+\cdots+n_{r-1}+1\}$ and $a_i=0$ otherwise.

Let $\h= \Q\langle e_0 ,e_1 \rangle$ be the non-commutative polynomial algebra in two indeterminates $e_0$ and $e_1$, and $\h^1:=\Q+e_1\h$ and $\h^0:=\Q+e_1\h e_0$ its subalgebras. 
Set 
\[ y_n := e_1e_0^{n-1}=e_1\underbrace{e_0\cdots e_0}_{n-1} \]
for each positive integer $n>0$.
It is easily seen that the subalgebra $\h^1$ is freely generated by $y_n$'s $(n\ge1)$ as a non-commutative polynomial algebra: 
\[ \h^1 = \Q\langle y_1,y_2,y_3,\ldots\rangle .\]
We define the shuffle product, a $\Q$-bilinear product on $\h$, inductively by
\begin{align*}
 &u  w \ \sh \ v  w' = u ( w\ \sh \ v w') + v(u w\ \sh \ w') ,
\end{align*}
with the initial condition $w\ \sh \ 1=1\ \sh \ w=w$ $(1\in\Q)$, where $w,w' \in\h$ and $u,v\in\{e_0,e_1\}$.
This provides the structures of commutative $\Q$-algebras for spaces $\h,\h^1$ and $\h^0$ (see \cite{Reu}), which we denote by $\h_\sh,\h_\sh^1$ and $\h_\sh^0$ respectively.
By taking the iterated integral \eqref{eq3_1}, with the identification $w_i(t)\leftrightarrow e_i \ (i\in\{0,1\})$, one can define an algebra homomorphism
\begin{align*}
Z:\h^0_\sh&\longrightarrow \R\\
y_{n_1}\cdots y_{n_r} &\longmapsto \zeta(n_1,\ldots,n_r) \quad (n_r>1)
\end{align*}
with $Z(1)=1$, since it is shown by K.T.~Chen \cite{Chen} that the iterated integral \eqref{eq3_1} satisfies the shuffle product formulas.
By \cite[Proposition 1]{IKZ}, there is a $\Q$-algebra homomorphism 
\[ Z^\sh:\h^1_\sh\rightarrow\R[T]\]
which is uniquely determined by the properties that $Z^{\sh}\big|_{\h^0_\sh} = Z$ and $Z^{\sh}(e_1)=T$.
We note that the image of the word $y_{n_1}\cdots y_{n_r}$ in $\h^1_\sh$ under the map $Z^\sh$ is a polynomial in $T$ whose coefficients are expressed as $\Q$-linear combinations of multiple zeta values.

\begin{definition}\label{3_1}
The regularized multiple zeta value, denoted by $\zeta^{\sh}(n_1,\ldots,n_r)$, is defined as the constant term of $Z^\sh (y_{n_1}\cdots y_{n_r})$ in $T$:
\[ \zeta^{\sh}(n_1,\ldots,n_r) := Z^\sh (y_{n_1}\cdots y_{n_r})\big|_{T=0} .\]
\end{definition}

For example, we have $\zeta^{\sh}(2,1)=-2\zeta(1,2)$ and 
\begin{equation}\label{eq3_2}
\zeta^\sh(n_1,\ldots,n_r)=\zeta(n_1,\ldots,n_r) \quad (n_r\ge2).
\end{equation}

\subsection{Hopf algebras of iterated integrals}

In this subsection, we recall Hopf algebras of formal iterated integrals introduced by Goncahrov.

In his paper \cite[Section 2]{G}, Goncharov considered a formal version of the iterated integrals
\begin{equation}\label{eq3_3} 
\int_{a_0}^{a_{N+1}} \frac{dt_1}{t_1-a_1} \cdots \int_{t_{N-2}}^{a_{N+1}} \frac{dt_{N-1}}{t_{N-1}-a_{N-1}} \int_{t_{N-1} }^{a_{N+1}}\frac{dt_N}{t_N-a_N} 
  \quad (a_i\in \C).
 \end{equation}
He proved that the space $\mathcal{I}_\bullet(S)$ generated by such formal iterated integrals carries a Hopf algebra structure.
Let us recall the definition of the space $\mathcal{I}_\bullet(S)$.

\begin{definition}\label{3_2}
Let $S$ be a set. 
Let us denote by $\mathcal{I}_{\bullet}(S)$ the commutative graded algebra over $\Q$ generated by the elements
\[  \mathbb{I}(a_0;a_1,\ldots,a_N;a_{N+1}), \quad  N\ge0,\ a_i\in S \]
with degree $N$ which are subject to the following relations.
\begin{enumerate}
\setlength{\itemsep}{-5pt}
\item[{\rm (I1)}] For any $a,b\in S$, the unit is given by $\mathbb{I}(a;b):=\mathbb{I}(a;\emptyset;b)=1$.
\item[{\rm (I2)}] The product is given by the shuffle product: for all integers $N,N'\ge0$ and $a_i\in S$, one has
\begin{align*}
& \mathbb{I}(a_0;a_1,\ldots,a_N;a_{N+N'+1})\mathbb{I}(a_0;a_{N+1},\ldots,a_{N+N'};a_{N+N'+1})\\
&= \sum_{\sigma\in\Sigma(N,N')} \mathbb{I}(a_0;a_{\sigma^{-1}(1)},\ldots,a_{\sigma^{-1}(N+N')};a_{N+N'+1}),
\end{align*}
where $\Sigma(N,N')$ is the set of $\sigma$ in the symmetric group $\mathfrak{S}_{N+N'}$ such that $\sigma(1)<\cdots<\sigma(N)$ and $\sigma(N+1)<\cdots<\sigma(N+N')$.
\item[{\rm (I3)}] The path composition formula holds: for any $N\ge0$ and $a_i,x\in S$, one has 
\[ \mathbb{I}(a_0;a_1,\ldots,a_N;a_{N+1}) = \sum_{k=0}^N \mathbb{I}(a_0;a_1,\ldots,a_k;x) \mathbb{I}(x;a_{k+1},\ldots,a_N;a_{N+1}).\]
\item[{\rm (I4)}] For $N\ge1$ and $a_i,a\in S$, $\mathbb{I} (a;a_1,\ldots,a_N;a)=0$.
\end{enumerate}
\end{definition}

We remark that the element $\mathbb{I}(a_0;a_1,\ldots,a_N;a_{N+1})$ is an analogue of the iterated integral \eqref{eq3_3}, since by K.T.~Chen \cite{Chen} iterated integrals satisfy (I1) to (I4) when the integral converges.

The Goncharov coproduct $\Delta: \mathcal{I}_{\bullet}(S) \rightarrow \mathcal{I}_{\bullet}(S)\otimes \mathcal{I}_{\bullet}(S)$ is defined by 
\begin{equation}\label{eq3_4}
\begin{aligned}
 \Delta \big(\mathbb{I}(a_0&;a_1,\ldots,a_N;a_{N+1})\big) \\
=&  \sum_{\substack{0\le k \le N\\i_0=0<i_1<\cdots<i_k<i_{k+1}=N+1}} \prod_{p=0}^k \mathbb{I}(a_{i_p};a_{i_p+1},\ldots,a_{i_{p+1}-1};a_{i_{p+1}})  \\
&\otimes \mathbb{I}(a_0;a_{i_1},\ldots,a_{i_k};a_{N+1}),
\end{aligned}
\end{equation}
for any $N\ge0$ and $a_i\in S$, and then extending by $\Q$-linearity.
The formula \eqref{eq3_4} can be found in \cite{B1} (see Eq.~(2.18)), originally given by Goncharov (see \cite[Eq.~(27)]{G}) with the factors interchanged.

\begin{proposition}{\rm \cite[Proposition 2.2]{G}}\label{3_3}
The Goncharov coproduct $\Delta$ gives $\mathcal{I}_{\bullet}(S)$ the structure of a commutative graded Hopf algebra, where the counit $\varepsilon$ is determined by the condition that it kills $\mathcal{I}_{>0}(S)$. 
\end{proposition}

We remark that the antipode $\mathcal{S}$ of the above Hopf algebra is uniquely and inductively determined by the definition (see \cite[Lemma A.1]{G}).
For example, since $\Delta(\mathbb{I}(a_0;a_1;a_2))=\mathbb{I}(a_0;a_1;a_2)\otimes1+1\otimes \mathbb{I}(a_0;a_1;a_2)$ for any $a_0,a_1,a_2\in S$, we have 
\[ \mathcal{S}(\mathbb{I}(a_0;a_1;a_2))+\mathbb{I}(a_0;a_1;a_2)=0= u \circ \varepsilon (\mathbb{I}(a_0;a_1;a_2)) ,\]
where $u:\Q\rightarrow \mathcal{I}_\bullet(S)$ is the unit.
In general, the formula is obtained from the fact that
\begin{align*}
& \mathcal{S}(\mathbb{I}(a_0;a_1,\ldots,a_N;a_{N+1}))+\mathbb{I}(a_0;a_1,\ldots,a_N;a_{N+1}) \\
&= \mbox{a $\Z$-linear combination of products of $\mathbb{I}$'s of degree $<N$},
\end{align*} 
which we do not develop the precise formula in this paper.

\subsection{Formal iterated integrals and regularized multiple zeta values}

In this subsection, we define the map $\mathfrak{z}^\sh$ described in the introduction.
Hereafter, we restrict only the Hopf algebra $\mathcal{I}_\bullet:= \mathcal{I}_{\bullet}(S)$ to the case with $S=\{0,1\}$.

Consider the quotient algebra
\[ \mathcal{I}_{\bullet}^1:=\mathcal{I}_{\bullet} / \mathbb{I}(0;0;1)\mathcal{I}_{\bullet} .\]
It is easy to verify that $\mathbb{I}(0;0;1)$ is primitive, i.e. $\Delta(\mathbb{I}(0;0;1))=1\otimes \mathbb{I}(0;0;1) + \mathbb{I}(0;0;1)\otimes 1$.
Thus the ideal $\mathbb{I}(0;0;1)\mathcal{I}_{\bullet}$ generated by $\mathbb{I}(0;0;1)$ in the $\Q$-algebra $\mathcal{I}_{\bullet}$ becomes a Hopf ideal, and hence the quotient map $\mathcal{I}_{\bullet} \rightarrow \mathcal{I}_{\bullet}^1$ induces a Hopf algebra structure on the quotient algebra $\mathcal{I}_{\bullet}^1$.
Let us denote by 
\[ I(a_0;a_1,\ldots,a_N;a_{N+1}) \in \mathcal{I}_{\bullet}^1\]
an image of $\mathbb{I}(a_0;a_1,\ldots,a_N;a_{N+1})$ in $\mathcal{I}_{\bullet}^1$ and by the same symbol $\Delta$ the induced coproduct on $\mathcal{I}_{\bullet}^1$ given by the same formula as \eqref{eq3_4} replacing $\mathbb{I}$ with $I$.
As a result, we have the following proposition which we will use later.

\begin{proposition}\label{3_4}
The coproduct $\Delta :\mathcal{I}_{\bullet}^1 \rightarrow \mathcal{I}_{\bullet}^1\otimes \mathcal{I}_{\bullet}^1$ is an algebra homomorphism, where the product on $\mathcal{I}_{\bullet}^1\otimes \mathcal{I}_{\bullet}^1$ is defined in the standard way by $(w_1\otimes w_2)(w_1'\otimes w_2')=w_1w_1'\otimes w_2w_2'$ and the product on each summand $\mathcal{I}^1_{\bullet}$ is given by the shuffle product {\rm (I2)}.
\end{proposition}

We remark that dividing $\mathcal{I}_{\bullet}$ by $\mathbb{I}(0;0;1)\mathcal{I}_{\bullet}$ can be viewed as a regularization for ``$\int_0^1 dt/t=-\log(0)$" which plays a role as $\mathbb{I}(0;0;1)$ in the evaluation of iterated integrals.
For example, one can write $I(0;0,1,0;1)=-2I(0;1,0,0;1)$ in $\mathcal{I}^1_{\bullet}$ since it follows $\mathbb{I}(0;0,1,0;1)=\mathbb{I}(0;0;1)\mathbb{I}(0;1,0;1)-2\mathbb{I}(0;1,0,0;1)$, and this computation corresponds to taking the constant term of $\int_{\varepsilon}^1 \frac{dt_1}{t_1}\int_{\varepsilon}^{t_1} \frac{dt_2}{1-t_2}\int_{\varepsilon}^{t_2}\frac{dt_3}{t_3}$ as a polynomial of $\log(\varepsilon)$ and letting $\varepsilon\rightarrow 0$.

By the standard calculation about the shuffle product formulas, we obtain more identities in the space $\mathcal{I}_{\bullet}^1$ (see \cite[p.955]{B1}).

\begin{enumerate}
\item For $n\ge1$ and $a,b\in\{0,1\}$, we have 
\begin{equation}\label{eq3_5}
I(a;\underbrace{0,\ldots,0}_n;b)=0. 
\end{equation}
\item For integers $n\ge0,n_1,\ldots,n_r\ge1$, we have
\begin{equation}\label{eq3_6}
\begin{aligned}
&I(0;\underbrace{0,\ldots,0}_n,\underbrace{1,0,\ldots,0}_{n_1},\ldots,\underbrace{1,0,\ldots,0}_{n_r};1)\\
&= (-1)^n \sum_{\substack{k_1+\cdots+k_r=n_1+\cdots +n_r+n\\ k_1,\ldots,k_r\ge1}} \bigg(\prod_{j=1}^{r}\binom{k_j-1}{n_j-1} \bigg) I (k_1,\ldots,k_r),
\end{aligned}
\end{equation}
where we set 
\[I(n_1,\ldots,n_r):=I(0;\underbrace{1,0,\ldots,0}_{n_1},\ldots,\underbrace{1,0,\ldots,0}_{n_r};1).\]
\end{enumerate}

In order to define the map $\mathfrak{z}^\sh$ as a $\Q$-linear map, we give a linear basis of the space $\mathcal{I}_{\bullet}^1$ first.

\begin{proposition}\label{3_5}
The set of elements $\{I(n_1,\ldots,n_r)\mid r\ge0,n_i\ge1\}$ is a linear basis of the space $\mathcal{I}_{\bullet}^1$.
\end{proposition}

\begin{proof}
Recall the result of Goncharov \cite[Proposition 2.1]{G}: for each integer $N\ge0$ and $a_0,\ldots,a_{N+1}\in\{0,1\}$ one has
\begin{equation}\label{eq3_7} 
\mathbb{I}(a_0;a_1,\ldots,a_N;a_{N+1}) = (-1)^N \mathbb{I}(a_{N+1};a_N,\ldots,a_1;a_0),
\end{equation}
which essentially follows from (I3) and (I4).
Then, we find that the collection 
\[ \{\mathbb{I}(0;a_1,\ldots,a_N;1)\mid N\ge0,a_i\in\{0,1\}\}\]
 forms a linear basis of the linear space $\mathcal{I}_{\bullet}$, since none of the relations (I1) to (I4) yield $\Q$-linear relations among them.
Combining this with \eqref{eq3_6}, we obtain the desired basis. 
\end{proof}

\begin{definition}\label{3_6}
Let $\mathfrak{z}^\sh:\mathcal{I}_{\bullet}^1\rightarrow \R$ be the $\Q$-linear map given by
\begin{align*}
\mathfrak{z}^\sh:\mathcal{I}_{\bullet}^1&\longrightarrow \R\\
 I(n_1,\ldots,n_r) &\longmapsto \zeta^{\sh}(n_1,\ldots,n_r)
\end{align*}
and $\mathfrak{z}^\sh(1)=1$.
\end{definition}

\begin{proposition}\label{3_7}
The map $\mathfrak{z}^{\sh} : \mathcal{I}_{\bullet}^1\rightarrow \R$ is an algebra homomorphism.
\end{proposition}
\begin{proof}
By Proposition \ref{3_5}, we find that the $\Q$-linear map $\h^1_\sh\rightarrow \mathcal{I}_{\bullet}^1$ given by $e_{a_1}\cdots e_{a_N}\mapsto I(0;a_1,\ldots, a_N;1)$ is an isomorphism between $\Q$-algebras.
Then the result follows from the standard fact that the map $Z^{\sh}\big|_{T=0} :\h_\sh^1 \rightarrow \R$ given by $ y_{n_1}\cdots y_{n_r} \mapsto \zeta^{\sh}(n_1,\ldots,n_r)$ is an algebra homomorphism.
\end{proof}

\subsection{Computing the Goncharov coproduct}

In this subsection, we rewrite the Goncharov coproduct $\Delta$ for $I(n_1,\ldots,n_r)$ in terms of $I(k_1,\ldots,k_i)$'s.
Although one can compute the formula by using Propositions \ref{3_8} and \ref{3_11}, we do not give explicit formulas for $\Delta(I(n_1,\ldots,n_r))$ in general.
We present an explicit formula for only $\Delta(I(n_1,n_2,n_3))$ in the end of this subsection.

To describe the formula, it is convenient to use the algebraic setup.
Let $\h':=\langle e_0,e_1,e_0',e_1'\rangle$ be the non-commutative polynomial algebra in four indeterminates $e_0,e_1,e_0'$ and $e_1'$.
For integers $0<i_1<i_2<\cdots <i_k<N+1$ \ $(0\le k\le N)$, we define 
\[ {\bf e}_{i_1,\ldots,i_k} (a_1,\ldots,a_N):= e_{a_1}\cdots e_{a_{i_1-1}} \left( \prod_{p=1}^{k-1} e_{a_{i_p}}' e_{a_{i_p+1}}\cdots e_{a_{i_{p+1}-1}} \right) e_{a_{i_k}}'e_{a_{i_k+1}}\cdots e_{a_{N}}\]
with each $a_j=0$ or $1$, where the product $\prod_{p=1}^{k-1}$ means the concatenation product.
Let 
\[\varphi : \h' \rightarrow \mathcal{I}_\bullet^1 \otimes\mathcal{I}_\bullet^1\]
 be the $\Q$-linear map that assigns to each word ${\bf e}_{i_1,\ldots,i_k} (a_1,\ldots,a_N)$ the factor of the right-hand side of the equation \eqref{eq3_4} with $a_0=0$ and $a_{N+1}=1$:
\begin{equation*}
\begin{aligned}
  &\varphi ({\bf e}_{i_1,\ldots,i_k} (a_1,\ldots,a_N))\\
  & = \prod_{p=0}^k I(a_{i_p};a_{i_p+1},\ldots,a_{i_{p+1}-1};a_{i_{p+1}}) \otimes I(0;a_{i_1},\ldots,a_{i_k};1),
  \end{aligned}
\end{equation*}
where we set $a_{i_0}=0$ and $a_{i_{k+1}}=1$.
For example, we have $\varphi({\bf e}_{2,3}(a_1,\ldots,a_4))=\varphi(e_{a_1}e_{a_2}' e_{a_3}' e_{a_4})= I(0;a_1;a_2)I(a_2;a_3)I(a_3;a_4;1)\otimes I(0;a_2,a_3;1)$.

In the rest of this subsection, for integers $n_1,\ldots,n_r\ge1$ with $N=n_1+\cdots+n_r$, we set 
\[\{a_1,\cdots, a_N\}=\{1,\underbrace{0,\ldots,0}_{n_1-1},1,\underbrace{0,\ldots,0}_{n_2-1},\ldots ,1,\underbrace{0,\ldots,0}_{n_r-1}\},\]
and write ${\bf e}_{i_1,\ldots,i_k}(n_1,\ldots,n_r):={\bf e}_{i_1,\ldots,i_k}(a_1,\ldots,a_N)$.
We note that $a_j=1$ if $j$ lies in the set 
\[ P_{n_1,\ldots,n_r}:=\{1,n_1+1,\ldots,n_1+\cdots+n_{r-1}+1\},\]
and $a_j=0$ otherwise.

Using the above notations, one has
\begin{equation}\label{eq3_8}
\Delta(I(n_1,\ldots,n_r)) =\sum_{k=0}^N \sum_{ 0<i_1<\cdots<i_k<N+1} \varphi({\bf e}_{i_1,\ldots,i_k} (n_1,\ldots,n_r)).
\end{equation}
To compute \eqref{eq3_8}, we split the right-hand side of \eqref{eq3_8} into $2^r$ sums of $\psi_{n_1,\ldots,n_r}(w_1\cdots w_r)$ defined below.

Define the map
\[ \iota_{n_1,\ldots,n_r} : 2^{\{1,2,\ldots,r\}} \longrightarrow 2^{P_{n_1,\ldots,n_r}}\]
given by $1\mapsto 1$ and $i\mapsto n_1+\cdots+n_{i-1}+1 \ (2\le i\le r)$, where for a set $X$ the set of all subsets of $X$ is denoted by $2^X$.
It follows that the map $\iota_{n_1,\ldots,n_r}$ is bijective.
Let
\begin{equation*}
\overline{\rho}_{n_1,\ldots,n_r}:= \iota_{n_1,\ldots,n_r} \circ \rho : \{\x,\y\}^\ast_r \longrightarrow 2^{P_{n_1,\ldots,n_r}},
\end{equation*}
where $\rho$ is defined in \eqref{eq2_3}.
The map $\overline{\rho}$ is also bijective. 
For instance, we have $\overline{\rho}_{n_1,\ldots,n_r}(\y\x\y\x^{r-3})=\{1,n_1+n_2+1\}$ and $\overline{\rho}_{n_1,\ldots,n_r}(\x^r)=\{\emptyset\}$.
With the map $\overline{\rho}_{n_1,\ldots,n_r}$, for integers $n_1,\ldots,n_r\ge1$ and a word $w_1\cdots w_r$ ($w_i\in\{\x,\y\}$) we define 
\begin{equation}\label{eq3_9}
\psi_{n_1,\ldots,n_r}(w_1\cdots w_r) := \sum_{k=h}^N \sum_{\substack{0<i_1<\cdots<i_k<N+1\\ \{i_1,\ldots,i_k\}\cap P_{n_1,\ldots,n_r}= \overline{\rho}_{n_1,\ldots,n_r}(w_1\cdots w_r)}} \varphi({\bf e}_{i_1,\ldots,i_k} (n_1,\ldots,n_r)) ,
\end{equation}
where $h$ is the number of $\y$'s in the word $w_1\cdots w_r$ (i.e. $h=\deg_\y (w_1\cdots w_r)$).

\begin{proposition}\label{3_8}
For integers $n_1,\ldots,n_r\ge1$, we have
\[ \Delta\big(I(n_1,\ldots,n_r)\big) =  \sum_{w_1,\ldots,w_r\in\{\x,\y\}}  \psi_{n_1,\ldots,n_r}(w_1\cdots w_r).\]
\end{proposition}

\begin{proof}
For the word ${\bf e}_{i_1,\ldots,i_k}(n_1,\ldots,n_r)$, we denote by $h$ the number of $e_1'$'s in the prime symbols $e_{a_{i_1}}',\ldots,e_{a_{i_k}}'$, i.e. $h=\deg_{e_1'} ({\bf e}_{i_1,\ldots,i_k}(n_1,\ldots,n_r))$.
Since $a_j=1$ if and only if $j\in P_{n_1,\ldots,n_r}$, we have $h=\sharp (\{i_1,\ldots,i_k\}\cap P_{n_1,\ldots,n_r})$.
We notice that $h$ can be chosen from $\{0,1,\ldots,\min\{r,k\}\}$ for each $k$.
With this, the formula \eqref{eq3_8} can be written in the form
\begin{align*}
 \eqref{eq3_8} &= \sum_{k=0}^N \sum_{h=0}^{\min\{r,k\}} \sum_{\substack{0<i_1<\cdots<i_k<N+1\\ \sharp (\{i_1,\ldots,i_k\}\cap P_{n_1,\ldots,n_r})=h}}   \varphi({\bf e}_{i_1,\ldots,i_k}(n_1,\ldots,n_r))\\
&=  \sum_{h=0}^r \sum_{k=h}^{N} \sum_{\substack{0<i_1<\cdots<i_k<N+1\\ \sharp (\{i_1,\ldots,i_k\}\cap P_{n_1,\ldots,n_r})=h}}   \varphi({\bf e}_{i_1,\ldots,i_k}(n_1,\ldots,n_r)).
\end{align*}
By specifying the subset of $P_{n_1,\ldots,n_r}$ with length $h$, the above third sum can be split into the following sums:
\begin{align*}
 \eqref{eq3_8}&=\sum_{h=0}^r \sum_{k=h}^N\sum_{\substack{\{j_1,\ldots,j_h\}\subset P_{n_1,\ldots,n_r}\\j_1<\cdots<j_h}}  \sum_{\substack{0<i_1<\cdots<i_k<N+1\\ \{i_1,\ldots,i_k\}\cap P_{n_1,\ldots,n_r}= \{j_1,\ldots,j_h\}}} \varphi({\bf e}_{i_1,\ldots,i_k}(n_1,\ldots,n_r))\\
 &=\sum_{h=0}^r \sum_{\substack{\{j_1,\ldots,j_h\}\subset P_{n_1,\ldots,n_r}\\j_1<\cdots<j_h}}  \sum_{k=h}^N\sum_{\substack{0<i_1<\cdots<i_k<N+1\\ \{i_1,\ldots,i_k\}\cap P_{n_1,\ldots,n_r}= \{j_1,\ldots,j_h\}}} \varphi({\bf e}_{i_1,\ldots,i_k}(n_1,\ldots,n_r))\\
&=\sum_{h=0}^r \sum_{\substack{w_1,\ldots,w_r\in\{\x,\y\}\\ \deg_\y w_1\cdots w_r =h}} \psi_{n_1,\ldots,n_r}(w_1\cdots w_r)\\
&=\sum_{w_1,\ldots,w_r\in\{\x,\y\}}  \psi_{n_1,\ldots,n_r}(w_1\cdots w_r),
\end{align*}
which completes the proof.
\end{proof}

We express \eqref{eq3_9} as algebraic combinations of $I(k_1,\ldots,k_i)$'s.
To do this, we extract possible nonzero terms from the right-hand side of \eqref{eq3_9} by using (I4). 
For a positive integer $n$, we define $\eta_0 (n)$ as the sum of all words of degree $n-1$ consisting of $e_0$ and a consecutive $e_0'$:
\[ \eta_0 (n) = \sum_{\substack{\alpha+k+\beta=n\\\alpha,\beta\ge0\\k\ge1}} e_0^{\alpha} (e_0')^{k-1} e_0^{\beta}.\]

\begin{proposition}\label{3_9}
For integers $n_1,\ldots,n_r\ge1$ and a word $w_1\cdots w_r$ of length $r$ in $\{\x,\y\}^*$, we have
{\footnotesize \begin{equation*}
\begin{aligned}
&\psi_{n_1,\ldots,n_r}(w_1\cdots w_r)=\\
&\sum_{\substack{t_1\le q_1\le t_2-1\\t_2\le q_2\le t_3-1\\\vdots\\ t_h\le q_h\le r}} \varphi\Big(  y_{n_1} \cdots y_{n_{t_1-1}} \prod_{m=1}^h \big(e_1' \underbrace{e_0^{n_{t_m}-1} y_{n_{t_m+1}}\cdots y_{n_{q_{m-1}}} e_1}_{{\rm degree\ in}\ e_1 = q_m-t_m }\eta_0(n_{q_m}) y_{n_{q_m+1}}\cdots y_{n_{t_{m+1}-1}}\big)\bigg),
\end{aligned}
\end{equation*}}
where $\{t_1,\ldots,t_h\}=\rho(w_1\cdots w_r)$ given by \eqref{eq2_3}, $t_{h+1}=r+1$ and the product $\prod_{m=1}^h$ means the concatenation product of words.
\end{proposition}

\begin{proof}
It follows $\psi_{n_1,\ldots,n_r}(\x^r)= \varphi(y_{n_1}\cdots y_{n_r})$, so we consider the case $h>0$ which means the number of $\y$'s in $w_1\cdots w_r$ is greater than 0.
We note that the sum defining \eqref{eq3_9} runs over all words $c_{a_1}\cdots c_{a_N}$ ($c_{a_i}\in\{e_{a_i},e_{a_i}'\}$) with $k$ ($h\le k\le N$) prime symbols whose positions of $e_1'$'s are placed on the set $\{j_1,\ldots,j_h\}=\overline{\rho}_{n_1,\ldots,n_r}(w_1\cdots w_r)$:
\begin{align*}
\eqref{eq3_9} &= \sum_{k=h}^N \sum_{\substack{w_0,w_1,\ldots,w_h\in \{e_0,e_1,e_0'\}^*\\ \deg_{e_0'} (w_0w_1\cdots w_h) = k-h\\ \deg (w_0)= j_1-1 \\ \deg (e_1'w_m)=j_{m+1}-j_m \ (1\le m\le h) \\ j_{h+1}=N+1}} \varphi \Big(  w_0 \prod_{m=1}^h \big(e_1' w_m \big) \Big).
\end{align*}

We find by (I4) that $\varphi({\bf e}_{i_1,\ldots,i_k}(n_1,\ldots,n_r))$ is 0 whenever $a_{i_1}=0$ (notice that $a_0=0$ and $a_1=1$).
This implies that if the degree of the above $w_0$ in the letter $e_0'$ is greater than 0, then $\varphi \Big(  w_0 \prod_{m=1}^h \big(e_1' w_m \big) \Big)=0$.
For a word $w\in\h'$, we also find $\varphi(w)=0$ if $w$ contains a subword of the form $e_0' v e_0'$ with $v \in\h \ (v\neq \emptyset)$, i.e. $w=w_1 e_0' ve_0'  w_2$ for some $w_1,w_2\in\h'$, because the factor of the left-hand side of $\varphi(w)$ involves $I(0;v;0)$ which by (I4) is 0.
This implies that the above second sum regarding $w_m$ ($1\le m\le h$) of the form $w_m=w_1 e_0' ve_0'  w_2$ with $v \in\{e_0,e_1\}^* \ (v\neq \emptyset)$ and $w_1,w_2\in\{e_0,e_1,e_0'\}^*$ can be excluded.
Thus, the possible nonzero terms in \eqref{eq3_9}, sieved out by (I4), occur if $w_0=y_{n_1}\cdots y_{n_{t_1-1}}$ and $w_m$ is written in the form
\[ \underbrace{e_0^{n_{t_m}-1} y_{n_{t_m+1}}\cdots y_{n_{q_{m-1}}} e_1}_{{\rm degree\ in}\ e_1 = q_m-t_m }e_0^\alpha (e_0')^{k} e_0^{\beta} y_{n_{q_m+1}}\cdots y_{n_{t_{m+1}-1}}, \]
where $q_m\in \{t_m,t_m+1,\ldots, t_{m+1}-1\}$, $\alpha,\beta,k\in\Z_{\ge0}$ with $\alpha+k+\beta=n_{q_m}-1$ and $\{t_1,\ldots,t_h\}=\rho(w_1\cdots w_r)$.
This completes the proof.
\end{proof}

Before giving an explicit formula for $\psi_{n_1,\ldots,n_r}(w_1\cdots w_r)$, we illustrate examples for $r=1$ and $2$.

\begin{example}
By definition, for $n_1\ge1$ one has $\psi_{n_1}(\x)=\varphi(e_1e_0^{n_1-1})=I(n_1)\otimes1$.
By \eqref{eq3_5}, we obtain
\[\psi_{n_1}(\y)= \sum_{\substack{\alpha+k+\beta=n_1\\ \alpha,\beta\ge0 \\ k\ge1}}  \varphi(e_1' e_0^{\alpha} (e_0')^{k-1} e_0^{\beta})=1\otimes I(n_1).\]
Thus, we get
\[\Delta(I(n_1)) = I(n_1)\otimes 1 + 1\otimes I(n_1) .\]
\end{example}

\begin{example}\label{3_10}
It follows $\psi_{n_1,n_2}(\x\x)=\varphi(e_1e_0^{n_1-1}e_1 e_0^{n_2-1})=I(n_1,n_2)\otimes1$.
By \eqref{eq3_5} one can compute
\begin{align*}
\psi_{n_1,n_2}(\x\y)&=\sum_{\substack{\alpha+k+\beta=n_2\\ \alpha,\beta\ge0 \\ k\ge1}}  \varphi(e_1e_0^{n_1-1}e_1' e_0^{\alpha} (e_0')^{k-1} e_0^{\beta})=I(n_1)\otimes I(n_2),\\
\psi_{n_1,n_2}(\y\y)&=\sum_{\substack{\alpha_1+k_1+\beta_1=n_1\\ \alpha_1,\beta_1\ge0 \\ k_1\ge1}}\sum_{\substack{\alpha_2+k_2+\beta_2=n_2\\ \alpha_2,\beta_2\ge0 \\ k_2\ge1}} \varphi(e_1' e_0^{\alpha_1} (e_0')^{k_1-1} e_0^{\beta_1} e_1' e_0^{\alpha_2} (e_0')^{k_2-1} e_0^{\beta_2})\\
&=1\otimes I(n_1,n_2),
\end{align*}
and using \eqref{eq3_6} and \eqref{eq3_7} we have
\begin{align*}
\psi_{n_1,n_2}(\y\x)&= \sum_{\substack{\alpha+k+\beta=n_1\\ \alpha,\beta\ge0 \\ k\ge1}} \varphi( e_1' e_0^{\alpha} (e_0')^{k-1} e_0^{\beta}e_1e_0^{n_2-1} ) + \sum_{\substack{\alpha+k+\beta=n_2\\ \alpha,\beta\ge0 \\ k\ge1}}\varphi( e_1' e_0^{n_1-1}  e_1 e_0^{\alpha} (e_0')^{k-1} e_0^{\beta})\\
&= \sum_{\substack{k_1+k_2=n_1+n_2\\ k_1,k_2\ge1}} {\tt b}_{n_1,n_2}^{k_1} I(k_1)\otimes I(k_2),
\end{align*}
where ${\tt b}_{n,n'}^k$ is defined in \eqref{eq2_7}.
Therefore by Proposition \ref{3_8} we have
\begin{equation}\label{eq3_10}
 \begin{aligned}
& \Delta(I(n_1,n_2)) \\
 &= I(n_1,n_2)\otimes 1+\sum_{\substack{k_1+k_2=n_1+n_2\\k_1,k_2\ge1}} \big( \delta_{n_1,k_1} + {\tt b}_{n_1,n_2}^{k_1} \big) I(k_1)\otimes I(k_2) + 1\otimes I(n_1,n_2).
 \end{aligned}
\end{equation}
\end{example}

\begin{proposition}\label{3_11}
For integers $n_1,\ldots,n_r\ge2$ and a word $w_1\cdots w_r\in \{\x,\y\}^*$, we set $N_{t_m}=n_{t_m}+n_{t_m+1}+\cdots+n_{t_{m+1}-1}$ for each $m\in\{1,2,\ldots,h\}$, where $\{t_1,\ldots,t_h\}=\rho(w_1\cdots w_r)$ and $t_{h+1}=r+1$.
Then we have
\begin{align*}
&\psi_{n_1,\ldots,n_r}(w_1\cdots w_r) = (I(n_1,\ldots,n_{t_1-1}) \otimes 1)\\
&\times \sum_{\substack{t_1\le q_1\le t_2-1\\t_2\le q_2\le t_3-1\\\vdots\\ t_h\le q_h\le r}}  \sum_{\substack{k_{t_1}+\cdots+k_{t_{2}-1}=N_{t_1}\\k_{t_2}+\cdots+k_{t_3-1}=N_{t_2}\\ \vdots\\ k_{t_h}+\cdots+k_r=N_{t_h}\\ k_i\ge1}} \bigg\{ (-1)^{\sum_{m=1}^h (N_{t_m}+n_{q_m}+k_{q_m+1}+k_{q_m+2}+\cdots+k_{q_{m+1}-1})}\\
&\times \bigg(\prod_{\substack{j=t_1\\j\neq q_1,\ldots,q_h}}^{r} \binom{k_j-1}{n_j-1} \bigg) \bigg(\prod_{m=1}^h I(\underbrace{k_{q_m-1},\ldots,k_{t_m}}_{q_m-t_m}) I(\underbrace{k_{q_m+1},\ldots,k_{t_{m+1}-1}}_{t_{m+1}-q_m-1})\bigg) \\
&\otimes I(k_{q_1},\ldots,k_{q_h})\bigg\},
\end{align*}
where $\prod_{\substack{j=t_1\\j\neq q_1,\ldots,q_h}}^{r} \binom{k_j-1}{n_j-1} =1$ when $\{t_1,t_1+1,\ldots,r\}= \{q_1,\ldots,q_h\}$.
\end{proposition}

\begin{proof}
This can be verified by applying the identities \eqref{eq3_5}, \eqref{eq3_6} and \eqref{eq3_7} to the formula in Proposition \ref{3_9}.
\end{proof}

\begin{example}\label{3_12}
For the future literature, we present an explicit formula for $\Delta(I(n_1,n_2,n_3))$ obtained from Propositions \ref{3_8} and \ref{3_11}:
\begin{align*}
&\Delta(I(n_1,n_2,n_3))\\
& =I (n_1,n_2,n_3)\otimes 1+I(n_1,n_2)\otimes I(n_3)+I(n_1)\otimes I(n_2,n_3)+1\otimes I(n_1,n_2,n_3)\\
&+ \sum_{\substack{k_1+k_2+k_3=n_1+n_2+n_3\\ k_1,k_2,k_3\ge1}} \bigg\{\left( \delta_{n_3,k_3}{\tt b}_{n_1,n_2}^{k_1}+\delta_{n_1,k_2}{\tt b}_{n_2,n_3}^{k_1} \right) I(k_1)\otimes I(k_2,k_3) \\
& +\left((-1)^{n_1+k_3}\binom{k_2-1}{n_3-1} + (-1)^{n_1+n_2} \binom{k_2-1}{n_1-1} \right)  \binom{k_1-1}{n_2-1} I(k_1,k_2)\otimes I(k_3) \\
&+\left( (-1)^{n_1+n_3+k_2} \binom{k_1-1}{n_1-1}\binom{k_2-1}{n_3-1}  + \delta_{k_1,n_1} {\tt b}_{n_2,n_3}^{k_2} \right) I(k_1)I(k_2)\otimes I(k_3) \bigg\}   .
\end{align*}
\end{example}

\subsection{Proof of Theorem \ref{1_1}}

We now give a proof of Theorem \ref{1_1}.
Recall the $q$-series $g_{n_1,\ldots,n_r}(q)$ defined in \eqref{eq2_4}.
Let 
\[\mathfrak{g}: \mathcal{I}_{\bullet}^1 \rightarrow \C[[q]]\]
be the $\Q$-linear map given by $\mathfrak{g}(I(n_1,\ldots,n_r))= g_{n_1,\ldots,n_r}(q)$ and $\mathfrak{g}(1)=1$.

\begin{proof}[Proof of Theorem \ref{1_1}]
Taking $\mathfrak{z}^{\sh}\otimes \mathfrak{g}$ for the explicit formula in Proposition \ref{3_11} and comparing this with Proposition \ref{2_5}, we have 
\[ \big(\mathfrak{z}^{\sh}\otimes \mathfrak{g}\big) \big(\psi_{n_1,\ldots,n_r}(w_1\cdots w_r)\big)= G_{n_1,\ldots,n_r}(w_1\cdots w_r) .\]
Here the second sum (relating to $k_i$) of the formula in Proposition \ref{3_11} differs from that of the formula in Proposition \ref{2_5}, but apparently it is the same because binomial coefficient terms allow us to take $k_i\ge n_i$ for $t_1\le i \le r$ with $i\neq q_1,\ldots,q_h$, and by Lemma \ref{2_4} it turns out that the coefficient of $g_{k_{q_1},\ldots,k_{q_h}}(q)$ becomes 0 if $k_{q_j}=1$ for some $1\le j\le h$. 
With this, the statement follows from Propositions \ref{2_2} and \ref{3_8}. 
\end{proof}

\section{The algebra of multiple Eisenstein series}

\subsection{The shuffle algebra consisting of certain $q$-series}

In this subsection, we define the map $\mathfrak{g}^\sh:\mathcal{I}_\bullet^1 \rightarrow \C[[q]]$ described in the introduction. 
We first introduce the power series $H\tbinom{n_1,\ldots,n_r}{x_1,\ldots,x_r}$ satisfying the harmonic product formulas.
Then, by using Hoffman's results, the power series $h(x_1,\ldots,x_r)$ (see \eqref{eq4_1}) satisfying the shuffle relation \eqref{eq4_3}, which is a variant of the shuffle relation (I2) (reformulated in \eqref{eq4_5}), is constructed.
Finally, we introduce the power series $g_\sh(x_1,\ldots,x_r)$ (see \eqref{eq4_4}) and prove in Theorem \ref{4_6} that their coefficients, which are $q$-series, satisfy the shuffle relations given in (I2).

Consider the iterated multiple sum
\begin{align*}
H\tbinom{n_1,\ldots,n_r}{x_1,\ldots,x_r} :=\sum_{0<d_1<\cdots<d_r} e^{d_1x_1}\left(\frac{q^{d_1}}{1-q^{d_1}}\right)^{n_1} \cdots e^{d_rx_r}\left(\frac{q^{d_r}}{1-q^{d_r}}\right)^{n_r},
\end{align*}
where $n_1,\ldots,n_r$ are positive integers and $x_1,\ldots,x_r$ are commutative variables, i.e. these are elements in the power series ring $\mathcal{K} [[x_1,\ldots,x_r]]$, where $\mathcal{K}=\Q[[q]]$.
It is easily seen that the power series $H\tbinom{n_1,\ldots,n_r}{x_1,\ldots,x_r}$ satisfies the harmonic product: for example, one has
\begin{align*}
H\tbinom{1}{x_1}H\tbinom{1}{x_2}&= \sum_{0<d_1,d_2} e^{d_1x_1} \frac{q^{d_1}}{1-q^{d_1}} e^{d_2x_2}\frac{q^{d_2}}{1-q^{d_2}} \\
&= \bigg( \sum_{0<d_1<d_2}+\sum_{0<d_2<d_1}+\sum_{0<d_1=d_2} \bigg)e^{d_1x_1} \frac{q^{d_1}}{1-q^{d_1}} e^{d_2x_2}\frac{q^{d_2}}{1-q^{d_2}}\\
&= H\tbinom{1,1}{x_1,x_2}+H\tbinom{1,1}{x_2,x_1}+H\tbinom{2}{x_1+x_2}.
\end{align*}

The power series $H\tbinom{n_1,\ldots,n_r}{x_1,\ldots,x_r}$ naturally appears in an expression of the generating series of $g_{n_1,\ldots,n_r}(q)$.

\begin{lemma}\label{4_1}
For any $r>0$, let
\[ g(x_1,\ldots,x_r):=\sum_{n_1,\ldots,n_r\ge1} \frac{g_{n_1,\ldots,n_r}(q)}{(-2\pi \sqrt{-1})^{n_1+\cdots+n_r} }x_1^{n_1-1}\cdots x_r^{n_r-1}.\]
Then, we have
\begin{equation*}
g(x_1,\ldots,x_r)= H\tbinom{1,\ldots,1,1}{x_r-x_{r-1},\ldots,x_2-x_1,x_1} .
\end{equation*}
\end{lemma}

\begin{proof}
When $r=2$ this was computed in the proof of Theorem 7 in \cite{GKZ} with the opposite convention. 
Our claim is its generalization, which can be easily shown. 
\end{proof}

For $r>0$, consider the power series
\begin{equation}\label{eq4_1}
 h(x_1,\ldots,x_r):=\sum_{m=1}^r\sum_{(i_1,i_2,\ldots,i_m)} \frac{1}{i_1!i_2!\cdots i_m!} H\tbinom{i_1,i_2,\ldots,i_m}{x_{i_1}',x_{i_2}',\ldots,x_{i_m}'},
\end{equation}
where the second sum runs over all decompositions of the integer $r$ as a sum of $m$ positive integers and the variables are given by $x_{i_1}'=x_1+\cdots+x_{i_1},x_{i_2}'=x_{i_1+1}+\cdots+x_{i_1+i_2},\ldots,x_{i_m}'=x_{i_1+\cdots+i_{m-1}+1}+\cdots+x_r$.
For example, we have 
\[ h(x)=H\tbinom{1}{x},\ h(x_1,x_2)=H\tbinom{1,1}{x_1,x_2}+\frac{1}{2}H\tbinom{2}{x_1+x_2}.\]

We shall prove that the power series $h(x_1,\ldots,x_r)$ satisfies the shuffle relation \eqref{eq4_3} below, which in the case of $r=2$ is given by 
\[ h(x_1)h(x_2) = h(x_1,x_2)+h(x_2,x_1).\]
To do this, we will reformulate the result of Hoffman \cite[Theorem 2.5]{H1} in accordance with our situation.

Let $\mathcal{U}$ be the non-commutative polynomial algebra over $\Q$ generated by non-commutative symbols $\tbinom{n}{z}$ indexed by $n\in \N$ and $z\in X:=\{ \sum_{i>0} a_i x_i \mid a_i\in\Z_{\ge0}$ is zero for almost all $i$'s $ \}$.
The word consisting of the concatenation products of letters $\tbinom{n_1}{z_1},\ldots, \tbinom{n_r}{z_r}$ is denoted by $\tbinom{n_1,\ldots,n_r}{z_1,\ldots,z_r} \ \big(=\tbinom{n_1}{z_1}\cdots \tbinom{n_r}{z_r}\big)$, for short.
The empty word is viewed as $1\in\Q$.
As usual, the harmonic product $\ast$ on $\mathcal{U}$ is inductively defined for $n,n'\in \N$, $z,z'\in X$ and words $w,w'$ in $\mathcal{U}$ by
\begin{align*}
 &\big(\tbinom{n}{z} \cdot w \big)\ast\big( \tbinom{n'}{z'} \cdot w'\big)\\
 &= \tbinom{n}{z} \cdot \big(w \ast \big(\tbinom{n'}{z'} \cdot w'\big)\big)+ \tbinom{n'}{z'} \cdot \big(\big( \tbinom{n}{z} \cdot w\big)  \ast   w'\big)+ \tbinom{n+n'}{z+z'} \cdot (w \ast w'),
\end{align*}
with the initial condition $w\ast 1=1\ast w=w$.
The shuffle product on $\mathcal{U}$ is defined in the same way as in $\sh$ on $\h=\Q\langle e_0,e_1 \rangle$, replacing the underlying vector space with $\mathcal{U}$.
Let us define the $\Q$-linear map ${\rm exp}:\mathcal{U}\rightarrow \mathcal{U}$ given for each linear basis $\tbinom{n_1,\ldots,n_r}{z_1,\ldots,z_r}$ by  
\begin{equation}\label{eq4_2}
{\rm exp} \left( \tbinom{n_1,\ldots,n_r}{z_1,\ldots,z_r} \right) =\sum_{m=1}^r\sum_{(i_1,i_2,\ldots,i_m)} \frac{1}{i_1!i_2!\cdots i_m!} \tbinom{n_{i_1}',n_{i_2}',\ldots,n_{i_m}'}{z_{i_1}',z_{i_2}',\ldots,z_{i_m}'},
\end{equation}
where the second sum runs over all decompositions of the integer $r$ as a sum of $m$ positive integers and the variables are given by $z_{i_1}'=z_1+\cdots+z_{i_1},z_{i_2}'=z_{i_1+1}+\cdots+z_{i_1+i_2},\ldots,z_{i_m}'=z_{i_1+\cdots+i_{m-1}+1}+\cdots+z_r$ and $n_{i_1}'=n_1+\cdots+n_{i_1},n_{i_2}'=n_{i_1+1}+\cdots+n_{i_1+i_2},\ldots,n_{i_m}'=n_{i_1+\cdots+i_{m-1}+1}+\cdots+n_r$.
This map was first considered by Hoffman (see \cite[p.52]{H1}) and called the exponential map.
Then, by \cite[Theorem 2.5]{H1}, we have the following proposition.

\begin{proposition}\label{4_2}
The exponential map gives the isomorphism 
\[{\rm exp} : \mathcal{U}_{\sh}\longrightarrow \mathcal{U}_{\ast}\]
as commutative $\Q$-algebras, where $\mathcal{U}_{\sh}$ (resp. $\mathcal{U}_{\ast}$) denotes the commutative algebra $\mathcal{U}$ equipped with the product $\sh$ (resp. $\ast$).
\end{proposition}

We now prove the shuffle relation for $h(x_1,\ldots,x_r)$'s.
We denote by $\mathfrak{S}_r$ the symmetric group of order $r$.
The group $\mathfrak{S}_r$ acts on $\mathcal{K}[[x_1,\ldots,x_r]]$ in the obvious way by $(f\big|\sigma)(x_1,\ldots,x_r)= f(x_{\sigma^{-1}(1)},\ldots,x_{\sigma^{-1}(r)})$, which defines a right action, i.e. $f\big|(\sigma\tau)=(f\big|\sigma)\big|\tau$.
This action extends to an action of the group ring $\Z[\mathfrak{S}_r]$ by linearity.

\begin{lemma}\label{4_3}
For any $r,s\ge1$, we have
\begin{equation}\label{eq4_3}
\begin{aligned}
&h(x_1,\ldots,x_r) h(x_{r+1},\ldots,x_{r+s}) = h(x_1,\ldots,x_{r+s})\big| sh_{r}^{(r+s)},
\end{aligned}
\end{equation}
where $sh_r^{(r+s)} := \sum_{\sigma\in\Sigma(r,s)}\sigma$ is in the group ring $\Z[\mathfrak{S}_{r+s}]$ and the set $\Sigma(r,s)$ is defined in the shuffle product formula {\rm (I2)}.
\end{lemma}

\begin{proof} 
Define the $\Q$-linear map 
\begin{equation*}
\begin{aligned}
 H:\mathcal{U}&\longrightarrow  \mathcal{R}:=\mathop {\varinjlim}_r \mathcal{K} [[x_1,\ldots,x_r]]\\
\tbinom{n_1,\ldots,n_r}{z_1,\ldots,z_r}&\longmapsto H\tbinom{n_1,\ldots,n_r}{z_1,\ldots,z_r}
\end{aligned}
\end{equation*}
for each linear basis $\tbinom{n_1,\ldots,n_r}{z_1,\ldots,z_r}$ of $\mathcal{U}$ with $H(1)=1$.
By combining the exponential map \eqref{eq4_2} with the map $H$, one easily finds that
\[  h(x_1,\ldots,x_r)= H\circ{\rm exp} \left( \tbinom{1,\ldots,1}{x_1,\ldots,x_r} \right). \]
Since the power series $H\tbinom{n_1,\ldots,n_r}{z_1,\ldots,z_r}$ satisfies the harmonic product, the algebra homomorphism $H:\mathcal{U}_\ast\rightarrow \mathcal{R}$ is obtained, and hence, by Proposition \ref{4_2}, the composition map $H\circ{\rm exp} : \mathcal{U}_\sh \rightarrow  \mathcal{R}$ is an algebra homomorphism.
Then, we have
\begin{align*}
h(x_1,\ldots,x_r)h(x_{r+1},\ldots,x_{r+s}) &= H\circ{\rm exp} \left( \tbinom{1,\ldots,1}{x_1,\ldots,x_r} \ \sh \ \tbinom{1,\ldots,1}{x_{r+1},\ldots,x_{r+s}} \right)\\
&=\sum_{\sigma\in \Sigma(r,s)} H\circ{\rm exp} \left( \tbinom{1,\ldots,1}{x_{\sigma^{-1}(1)},\ldots,x_{\sigma^{-1}(r+s)}} \right)\\
&= h(x_1,\ldots,x_{r+s}) \big| sh_r^{(r+s)},
\end{align*}
which completes the proof.
\end{proof}

For $r>0$, consider the power series
\begin{equation}\label{eq4_4}
g_{\sh}(x_1,\ldots,x_r) := h(x_r-x_{r-1},\ldots,x_2-x_1,x_1) .
\end{equation}
We shall prove that the $q$-series obtained from the coefficient of $x_1^{n_1-1}\cdots x_r^{n_r-1}$ in the power series $g_{\sh}(x_1,\ldots,x_r)$ satisfies the shuffle product formulas (I2).
For this, we use the generating series expression of the shuffle product formulas (I2) (this expression is found in \cite[Proof of Proposition 7]{IKZ} with the opposite convention).

\begin{proposition}\label{4_4}
For any $r>0$, let
\[ \mathbf{I}(x_1,\ldots,x_r) := \sum_{n_1,\ldots,n_r>0} I(n_1,\ldots,n_r) x_1^{n_1-1}\cdots x_r^{n_r-1} .\]
Then, the shuffle product formulas for $I(n_1,\ldots,n_r)$'s obtained in {\rm (I2)} is equivalent to  
\begin{equation}\label{eq4_5}
\mathbf{I}^{\sharp} (x_1,\ldots,x_r)\mathbf{I}^{\sharp}(x_{r+1},\ldots,x_{r+s}) = \mathbf{I}^{\sharp} (x_1,\ldots x_{r+s})\big| sh_r^{(r+s)},
\end{equation}
where the operator $\sharp$ is the change of variables defined by 
\[f^{\sharp}(x_1,\ldots,x_r)=f(x_1,x_1+x_2,\ldots,x_1+\cdots+x_r).\]
\end{proposition}

\begin{proof}
Computing the shuffle product formula (I2), one obtains
\begin{equation}\label{eq4_6}
I(n_1)I(n_2)=\sum_{\substack{k_1+k_2=n_1+n_2\\k_1,k_2\ge1}} \left(\binom{k_2-1}{n_1-1}+\binom{k_2-1}{n_2-1}\right) I(k_1,k_2),
\end{equation}
and hence we have 
\[  \mathbf{I}(x_1)  \mathbf{I}(x_2)  =  \mathbf{I}(x_2,x_1+x_2) + \mathbf{I}(x_1,x_1+x_2),\]
which coincides with the identity \eqref{eq4_5} for $r=s=1$.
The reminder (the case $r+s>2$) can be verified by induction.
\end{proof}

We now define the map $\mathfrak{g}^\sh:\mathcal{I}_\bullet^1 \rightarrow \C[[q]]$ and prove that this is an algebra homomorphism.

\begin{definition}\label{4_5}
Let us denote by $g^{\sh}_{n_1,\ldots,n_r}(q)$ the coefficient of $x_1^{n_1-1}\cdots x_r^{n_r-1}$ in the power series
\[  g_{\sh}(-2\pi\sqrt{-1} x_1,\ldots,-2\pi \sqrt{-1} x_r) = \sum_{n_1,\ldots,n_r>0}g^{\sh}_{n_1,\ldots,n_r}(q)x_1^{n_1-1}\cdots x_r^{n_r-1}. \]
With this, we define the $\Q$-linear map $\mathfrak{g}^{\sh}:\mathcal{I}_{\bullet}^1\rightarrow \C[[q]]$ for each linear basis $I(n_1,\ldots,n_r)$ of $\mathcal{I}_{\bullet}^1$ by 
$$\mathfrak{g}^{\sh}(I(n_1,\ldots,n_r))=g^{\sh}_{n_1,\ldots,n_r}(q)$$
and $\mathfrak{g}^{\sh}(1)=1$.
\end{definition}

\begin{theorem}\label{4_6}
The $q$-series $g^{\sh}_{n_1,\ldots,n_r}(q)$ satisfies the shuffle product formula {\rm (I2)}, namely, the map $\mathfrak{g}^{\sh}:\mathcal{I}_{\bullet}^1\rightarrow \C[[q]]$ is an algebra homomorphism.
\end{theorem}

\begin{proof}
By Proposition \ref{4_4}, it is sufficient to show that for any integers $r,s\ge1$ the power series $g_{\sh} (x_1,\ldots,x_{r+s})$ satisfies the relation 
\begin{equation*}
g_{\sh}^{\sharp} (x_1,\ldots,x_r)g_{\sh}^{\sharp}(x_{r+1},\ldots,x_{r+s}) = g_{\sh}^{\sharp} (x_1,\ldots x_{r+s})\big| sh_r^{(r+s)}.
\end{equation*}
For integers $r,s\ge1$, let 
\[ \rho_{r,s}=\begin{pmatrix} 1&2& \cdots & r & r+1 & \cdots &r+s-1& r+s \\ r&r-1& \cdots &1 & r+s & \cdots &r+2& r+1  \end{pmatrix}\in \mathfrak{S}_{r+s}.\] 
Applying the operator $\sharp$ to both sides of \eqref{eq4_4}, we obtain $g_{\sh}^{\sharp} (x_1,\ldots,x_r) = h(x_r,\ldots,x_1)$, and hence by Lemma \ref{4_3} we can compute 
\begin{align*}
g_{\sh}^{\sharp} (x_1,\ldots,x_r)g_{\sh}^{\sharp}(x_{r+1},\ldots,x_{r+s}) &= h(x_r,\ldots,x_1) h(x_{r+s},\ldots,x_{r+1}) \\
&= h(x_1,\ldots,x_r) h(x_{r+1},\ldots,x_{r+s}) \big| \rho_{r,s} \\&=  h(x_1,\ldots,x_{r+s})\big| sh_{r}^{(r+s)}\big| \rho_{r,s} \\
&= g_{\sh}^{\sharp} (x_1,\ldots,x_{r+s}) \big| \tau_{r+s} \big| sh_{r}^{(r+s)}\big| \rho_{r,s}  ,
\end{align*}
where we set $\tau_{r+s}=\begin{pmatrix} 1&2& \cdots & r+s \\ r+s& r+s-1& \cdots &1 \end{pmatrix}\in \mathfrak{S}_{r+s}$.
For any $\sigma \in \Sigma(r,s)$, one easily finds that $\tau_{r+s}\sigma\rho_{r,s} \in \Sigma(r,s)$, and hence
\[ g_{\sh}^{\sharp} (x_1,\ldots,x_{r+s})  \big| \tau_{r+s} \big| sh_{r}^{(r+s)}\big| \rho_{r,s} = g_{\sh}^{\sharp} (x_1,\ldots,x_{r+s})  \big| sh_{r}^{(r+s)},\]
which completes the proof.
\end{proof}

\begin{remark}\label{4_7}
From Theorem \ref{4_6}, we learn that the $q$-series $g_{n_1,\ldots,n_r}(q)$ does not satisfy the shuffle product formula (I2).
For example, by Theorem \ref{4_6} one has $g_\sh(x_1)g_\sh(x_2)= g_{\sh}(x_2,x_1+x_2)+g_{\sh}(x_1,x_1+x_2)$.
Since $ g_{\sh}(x)=g(x)$ and $g_{\sh}(x_1,x_2)=h(x_2-x_1,x_1)=g(x_1,x_2)+\frac{1}{2}H\tbinom{2}{x_2}$, one gets 
\begin{equation*}
g(x_1)g(x_2) = g(x_2,x_1+x_2)+g(x_1,x_1+x_2) + H\tbinom{2}{x_1+x_2},
\end{equation*}
which proves that the $q$-series $g_{n_1,n_2}(q)$ ($n_1,n_2\ge1$) do not satisfy the shuffle product formulas \eqref{eq4_6}, because $ H\tbinom{2}{x_1+x_2}\neq 0$.
\end{remark}

\subsection{Proof of Theorem \ref{1_2}}

In this subsection, we introduce the regularized multiple Eisenstein series $G^\sh_{n_1,\ldots,n_r}(q)$ for integers $n_1,\ldots,n_r\ge1$ as a $q$-series.
By relating $G^\sh$'s with $G$'s (Theorem \ref{4_10}), we show that the multiple Eisenstein series $G^\sh$'s satisfy the harmonic product (Theorem \ref{4_11}). 
Using these results, we finally prove the double shuffle relations for regularized multiple Eisenstein series (Theorem \ref{1_2}).

The regularized multiple Eisenstein series $G_{n_1,\ldots,n_r}^{\sh}(q)$ is defined as follows.

\begin{definition}\label{4_8}
For integers $n_1,\ldots,n_r\ge1$ we define the $q$-series $G_{n_1,\ldots,n_r}^{\sh}(q)$ by
\[ G_{n_1,\ldots,n_r}^{\sh}(q) = \big( \mathfrak{z}^{\sh}\otimes \mathfrak{g}^{\sh} \big) \circ \Delta(I(n_1,\ldots,n_r)) .\]
\end{definition}

\begin{example}\label{4_9}
For an integer $n\ge1$, we have
\[ G_{n}^\sh(q) = \zeta^\sh(n)+g^\sh_n(q) = \zeta^\sh(n)+g_n(q).\]
For integers $n_1,n_2\ge1$, by \eqref{eq3_10}, one has
\[ G_{n_1,n_2}^\sh(q) = \zeta^\sh(n_1,n_2)+\sum_{\substack{k_1+k_2=n_1+n_2\\k_1,k_2\ge1}} \big( \delta_{n_1,k_1} + {\tt b}_{n_1,n_2}^{k_1} \big) \zeta^\sh(k_1)g^\sh_{k_2}(q) + g^\sh_{n_1,n_2}(q),\]
where ${\tt b}_{n_1,n_2}^{k_1}$ is defined in \eqref{eq2_7}.
We remark that the above double Eisenstein series coincides with Kaneko's double Eisenstein series developed in \cite{Kaneko}.
\end{example}

We begin by showing a connection with the multiple Eisenstein series $G_{n_1,\ldots,n_r}(\tau)$, which can be regarded as an analogue of \eqref{eq3_2}.

\begin{theorem}\label{4_10}
For integers $n_1,\ldots,n_r\ge2$, with $q=e^{2\pi \sqrt{-1}\tau}$ we have
\[  G^{\sh}_{n_1,\ldots,n_r}(q) = G_{n_1,\ldots,n_r}(\tau).\]
\end{theorem}

\begin{proof}
This equation follows once we obtain the following identity: for integers $n_1,\ldots,n_r\ge2$
$$g^{\sh}_{n_1,\ldots,n_r}(q)=g_{n_1,\ldots,n_r}(q).$$
Combining \eqref{eq4_4} with \eqref{eq4_1}, we have
\begin{align*}
& g_{\sh}(x_1,\ldots,x_r) = \sum_{m=1}^r\sum_{(i_1,i_2,\ldots,i_m)} \frac{1}{i_1!i_2!\cdots,i_m!} H\tbinom{i_1,i_2,\ldots,i_m}{x_{i_1}'',x_{i_2}'',\ldots,x_{i_m}''},
 \end{align*}
where the second sum runs over all decompositions of the integer $r$ as a sum of $m$ positive integers and $x_{i_1}''= x_r-x_{r-i_1}, x_{i_2}'' =x_{r-i_1}-x_{r-i_1-i_2},\ldots,x_{i_m}''= x_{r-i_1-\cdots - i_{m-1}}$.
For any $n_1,\ldots,n_r\ge2$, since we have
\[ \mbox{coefficient of}\ x_1^{n_1-1}\cdots x_r^{n_r-1}\ \mbox{in}\ H\tbinom{i_1,i_2,\ldots,i_m}{x_{i_1}'',x_{i_2}'',\ldots,x_{i_m}''} =0 \]
whenever $i_j>1$ for some $j\in\{1,2,\ldots,m\}$,
we obtain
\begin{align*}
&\mbox{coefficient of $x_1^{n_1-1}\cdots x_r^{n_r-1}$ in $g_\sh (x_1,\ldots,x_r)$} \\
&= \mbox{coefficient of $x_1^{n_1-1}\cdots x_r^{n_r-1}$ in $H\tbinom{1,\ldots,1,1}{x_r-x_{r-1},\ldots,x_2-x_1,x_1}$},
\end{align*}
which by Lemma \ref{4_1} is equal to $g_{n_1,\ldots,n_r}(q)/(-2\pi \sqrt{-1})^{n_1+\cdots+n_r}$. 
This completes the proof.
\end{proof}

Let us prove the harmonic product for $G^\sh$'s.
The harmonic product $\ast$ on $\h^1$ is defined inductively by
\begin{align*}
 &y_{n_1}  w \ast y_{n_2}  w' = y_{n_1} ( w\ast y_{n_2} w') + y_{n_2}(y_{n_1} w\ast w') + y_{n_1+n_2} (w \ast  w'),
\end{align*}
and $w\ast 1=1\ast w=w$ for letters $y_{n_1},y_{n_2} \in\h^1$ and words $w,w'$ in $\h^1$, together with $\Q$-bilinearity. 
For each word $w\in\h^1$, the dual element of $w$ is denoted by $c_w\in(\h^1)^{\vee}={\rm Hom}(\h^1,\Q)$ such that $c_w(v)=\delta_{w,v}$ for any word $v\in \h^1$.
If $w$ is the empty word $\emptyset$, $c_w$ kills $\h^1_{>0}$ and $c_w(1)=1$.
Then, the harmonic product of the $q$-series $G^\sh_{n_1,\ldots,n_r}(q)$ with $n_1,\ldots,n_r\ge2$ is stated as follows:

\begin{theorem}\label{4_11}
For any words $w_1,w_2$ in $\{y_2,y_3,y_4,\ldots\}^{*}$, one has
\[ G_{w_1}^\sh (q) G_{w_2}^\sh (q) =  \sum_{w\in\{y_2,y_3,\ldots\}^*} c_w(w_1\ast w_2) G_w^\sh (q),\]
where we write $G^\sh_w(q)=G^\sh_{n_1,\ldots,n_r}(q)$ for each word $w=y_{n_1}\cdots y_{n_r}$.
\end{theorem}
\begin{proof} 
Consider the following holomorphic function on the upper half-plane: for integers $L,M>0$
\[ G^{(L,M)}_{n_1,\ldots,n_r}(\tau) = \sum_{\substack{0\prec \lambda_1\prec \cdots\prec \lambda_r\\ \lambda_i\in\Z_L \tau+\Z_M}} \frac{1}{\lambda_1^{n_1}\cdots \lambda_r^{n_r}}.\]
By definition, it follows that these functions satisfy the harmonic product: for any words $w_1,w_2\in\h^1$, one has
\begin{equation}\label{eq4_7}
G^{(L,M)}_{w_1}(\tau) G^{(L,M)}_{w_2}(\tau) = \sum_{w\in\{y_1,y_2,y_3,\ldots\}^*} c_w (w_1\ast w_2) G^{(L,M)}_w(\tau) .
\end{equation}
Since the space $\h^2:=\Q \langle y_2,y_3,y_4,\ldots\rangle$ is closed under the harmonic product $\ast$, taking $\lim_{L\rightarrow \infty}\lim_{M\rightarrow \infty}$ for both sides of \eqref{eq4_7}, one has for words $w_1,w_2\in\h^2$ 
\[ G_{w_1}(\tau) G_{w_2}(\tau) = \sum_{w\in\{y_2,y_3,y_4,\ldots\}^*} c_w (w_1\ast w_2) G_w(\tau).\]
Then the result follows from Theorem \ref{4_10}.
\end{proof}

We finally prove Theorem \ref{1_2}.
\begin{proof}[Proof of Theorem \ref{1_2}]
The precise statement of Theorem \ref{1_2} is as follows.
\begin{theorem}\label{4_12}
For any $w_1,w_2\in\h^2$, we obtain 
\[ \sum_{w\in\{y_1,y_2,y_3,\ldots\}^*} c_w (w_1\ \sh \ w_2) G_w^\sh(q) = \sum_{w\in\{y_2,y_3,\ldots\}^*} c_w (w_1\ast w_2) G_w^\sh(q),\]
which is called the restricted finite double shuffle relation in this paper.
\end{theorem}
\noindent
It follows from Propositions \ref{3_4}, \ref{3_7} and Theorem \ref{4_6} that $G^\sh$'s satisfies the shuffle product formula (I2).
With this, Theorem \ref{4_12} follows from Theorem \ref{4_11}.
\end{proof}

\begin{example}\label{4_13}
The first example of $\Q$-linear relations among $G^{\sh}$'s obtained in Theorem \ref{4_12} is 
\begin{equation*}
 G^{\sh}_4(q)-4 G^{\sh}_{1,3}(q)=0,
\end{equation*}
which comes from $y_2\ast y_2 - y_2\ \sh \ y_2$.
\end{example}

\begin{remark}\label{eq4_14}
Let us discuss the dimension of the space of $G^\sh$'s.
For our convenience, we take the normalization 
\[ \cG^\sh_{n_1,\ldots,n_r}(q) = \frac{1}{(-2\pi \sqrt{-1} )^{n_1+\cdots+n_r} } G^\sh_{n_1,\ldots,n_r}(q) .\]
As usual, we call $n_1+\cdots+n_r$ the weight and $\gamma_{n_1,\ldots,n_r}$ admissible if $n_r\ge2$.
Let $\mathcal{E}_N$ be the $\Q$-vector space spanned by all admissible $\cG^\sh$'s of weight $N$.
Set $\mathcal{E}_0=\Q$. 
The second author performed numerical experiments of the dimension of the above vector spaces up to $N=7$ by using Mathematica.
The list of the conjectural dimension is given as follows.
\begin{center}
$\begin{array}{c|ccccccccccccccccccccccccccc}
N &2&3&4&5&6&7\\\hline
\dim_{\Q} \mathcal{E}_N &1&2&3&6&10&18\\
\sharp\mbox{\ of admissible indices} &1&2&2^2&2^3&2^4&2^5
\end{array}$
\end{center}
Using Theorem \ref{4_12}, one can reduce the dimension of the space $\mathcal{E}_N$. 
The following is the table of the number of linearly independent relations provided by Theorem \ref{4_12}:
\begin{center}
$\begin{array}{c|ccccccccccccccccccccccccccc}
N & 0&1&2&3&4&5&6&7&8&9&10\\ \hline
\sharp \ {\rm of\ relations}& 0&0&0&0&1&1&3&5&11&19&37
\end{array}$
\end{center}
This table together with the dimension table shows that the relations obtained in Theorem \ref{4_12} are not enough to capture all relations among $\cG^\sh$'s for $N\ge5$.
\end{remark}

\begin{remark}
The above table of $\dim_\Q \mathcal{E}_N$ up to $N=7$ seemingly coincide with the table of $d_k'$ appeared in \cite[Table 5]{BK} which counts the dimension of the space of the generating series $[n_1,\ldots,n_r]$ of multiple divisor sums.
They suggest that the sequence $\{d_k'\}_{k\ge2}$ is given by $d_k'=2d_{k-2}'+2d_{k-3}'$ for $k\ge5$ with the initial values $d_2'=1,d_3'=2,d_4'=3$.
The $q$-series $\cG^\sh$ looks intimately related to the $q$-series $[n_1,\ldots,n_r]$, although we have no idea what the explicit relationships are.
\end{remark}

\begin{remark}
It is also worth mentioning that the $\Q$-algebra $\mathcal{E}$ contains the ring of quasimodular forms for ${\rm SL}_2(\Z)$ over $\Q$:
\[ \Q[\cG^{\sh}_2,\cG^\sh_4,\cG^\sh_6] \subset \mathcal{E},\]
and that the ring of quasimodular forms is stable under the derivative ${\rm d}=qd/dq$ (see \cite{KanZag}).
It might be remarkable to consider whether the $\Q$-algebra $\mathcal{E}$ is stable under the derivative, because by expressing ${\rm d}\cG^\sh$ as $\Q$-linear combinations of $\cG^\sh$'s and taking the constant term as an element in $\C[[q]]$ one obtains $\Q$-linear relations among multiple zeta values.
For instance, Kaneko \cite{Kaneko} proved the identity
\[ {\rm d}\cG^{\sh}_{N}(q) = 2N \big(\cG_{N+2}^{\sh} (q)-\sum_{i=1}^{N} \cG^{\sh}_{i,N+2-i}(q) \big),\] 
which provides the well-known formula $\zeta(N+2)=\sum_{i=1}^N \zeta(i,N+2-i)$.
We hope to discuss these problems in a future publication.
\end{remark}


\end{document}